\newtheorem{Thm}{Theorem}[section]
\newtheorem{Lem}[Thm]{Lemma}
\newtheorem{Prop}[Thm]{Proposition}
\theoremstyle{definition}
\newtheorem{Def}[Thm]{Definition}
\theoremstyle{remark}
\newtheorem{Rmk}[Thm]{Remark}
\newcommand{\R}{\mathbb{R}}
\newcommand{\gexp}[2]{exp^{G}_{#1,#2}}
\newcommand{\gsexp}[2]{exp^{G^*}_{#1,#2}}
\newcommand{\g}{\mathfrak{g}}
\newcommand{\h}{\mathfrak{h}}
\newcommand{\gsub}[2]{\partial_G #1 (#2)}
\newcommand{\yth}{y_{\theta}}
\newcommand{\vth}{v_{\theta}}
\newcommand{\nbhd}[2]{\mathcal{N}_{#1} \left( #2 \right)}
\newcommand{\gsubs}[2]{\partial_G^* #1 (#2)}
\newcommand{\vol}[1]{\mathrm{Vol} \left( #1 \right)}
\newcommand{\dist}{\mathrm{dist}}
\newcommand{\len}[1]{\mathrm{Length} \left( #1 \right)}
\title[]{Local H$\ddot{\textrm{O}}$lder regularity of solutions to generated Jacobian equations}
\author{Seonghyeon Jeong}
\begin{document}

\maketitle

\begin{abstract}
Generated Jacobian equations are Monge-Amp$\grave{\textrm{e}}$re type equations which contain optimal transport as a special case. Therefore, optimal transport case has its own special structure which is not necessarily true for more general generated Jacobian equations. Hence the theory for optimal transport can not be directly transplanted to generated Jacobian equations. In this paper, we point out the difficulties that prevent applying the proof the local H$\ddot{\textrm{o}}$lder regularity of solutions of optimal transport problem from \cite{Loeper2009OnTR} directly to Generated Jacobian Equations, we then discuss how to handle these difficulties, and prove local H$\ddot{\textrm{o}}$lder regularity in the generated Jacobian equation case.
\end{abstract}

\section{Introduction}
In this paper, we will consider the H$\ddot{\textrm{o}}$lder regularity theory of solutions to generated Jacobian equations(GJE), which is one type of prescribed Jacobian equation(PJE). A (PJE) is a second order PDE of the form 
\begin{displaymath}
\label{PJE} \tag{PJE}
\det \left( D_x ( T(x, D\phi(x) ,\phi(x) ) ) \right) = \psi(x,D\phi(x),\phi(x)),
\end{displaymath}
where $T: X \times \R \times \R^n \to \R^n$ and $\psi: X \times \R \times \R$, with unkown $\phi : X \to \R$. If there are functions $G$ and $V$ that satisfy
\begin{displaymath}
\left\{ \begin{array}{rl}
D_x G(x, T(x,p,u), V(x,p,u)) & = p \\
G(x, T(x,p,u), V(x,p,u)) & = u
\end{array} \right.,
\end{displaymath}
then the $\eqref{PJE}$ can be written as follows :
\begin{equation}
\label{GJE} \tag{GJE}
\det \left( D^2_x \phi(x) - A(x, D\phi(x), \phi(x)) \right) = \bar{\psi}(x, D\phi(x), \phi(x))
\end{equation}
where
\begin{align*}
A(x,p,u) & = D_x^2G (x, T(x,p,u), V(x,p,u)) \\
\bar{\psi}(x,p,u) & = \det(E(x, T(x,p,u), V(x,p,u))) \psi(x,p,u)
\end{align*}
with $E$ from \eqref{Gnondeg} in 2.1. \eqref{GJE} is called a generated Jacobian equation and we call $G$ the generating function of $\eqref{GJE}$. Well-known examples of $\eqref{GJE}$ arise in Monge-Amp$\grave{\textrm{e}}$re equation, optimal transport, and near and far field reflector antenna design. The second boundary condition of $\eqref{GJE}$ is that for some given domain $Y \subset \R^n$, the image of $X$ under the map $T( \cdot, D\phi(\cdot), \phi(\cdot))$ is equal to $Y$. As a special case, if we have measures which are supported on $X$ and $Y$ (we call them the source and the target respectively), then this second boundary condition can be replaced by defining weak solutions using the measures. We discuss this in 2.3.\\
The main theorem of this paper is local H$\ddot{\textrm{o}}$lder regularity of solutions to $\eqref{GJE}$. In the literature, it is known that, if the source and the target are bounded away from 0 and $\infty$ with respect to the Lebesgue measure, and if the generating function satisfies (G3w) together with some other conditions, then the solution $\phi$ to the $\eqref{GJE}$ has local H$\ddot{\textrm{o}}$lder regularity(See \cite{Guillen2015PointwiseEA}). But in optimal transport, which is a special case of $\eqref{GJE}$, the author of \cite{Loeper2009OnTR} showed the local H$\ddot{\textrm{o}}$lder regularity using $\eqref{As}$ condition, but weaker assumption on the source measure. The main theorem of this paper is parallel to his result in optimal transport. \\
The idea of the proof of the main theorem of this paper will be imported from \cite{Loeper2009OnTR} and the arXiv version of \cite{Kim2007ContinuityCA}. But, since there are structural differences between $\eqref{GJE}$ and optimal transport, we need to adjust the idea in \cite{Loeper2009OnTR} to import it to $\eqref{GJE}$. One of the big differences is the dependency of the generating function on the scalar parameter $v$. In optimal transport case, the generating function is $G(x,y,v) = -c(x,y)-v$, so that by taking the derivative with respect to $x$ or $y$, dependency on $v$ vanishes. This is not necessarily true in more general $\eqref{GJE}$ case. Hence every estimate about the cost function $c(x,y)$ should be combined with an estimate about the scalar parameter $v$ to import them to $\eqref{GJE}$. Another big difference is that the conditions on optimal transport hold on the whole space $X \times Y \times \R$, and all the derivatives have bounded norm. More general generating functions, however, do not necessarily satisfy the conditions on the whole space $X \times Y \times \R$, but only on a subset $\g \subset X \times Y \times \R$. An example can be found in \cite{Liu2015OnTC}. Moreover, because of the dependency on the scalar parameter, the derivatives of the generating function might not be bounded. Hence whenever we want to use conditions and derivatives of the generating function, we need to check that the points used are in a ``nice'' set $\g$ and stay inside a compact set. Since we are going to focus on the local regularity, this can be done by localizing the argument. Lemma $\ref{localniceg}$ will allow us to choose points in a nice compact set.\\
We introduce some related results. Optimal transport case is done in \cite{Loeper2009OnTR} and the arXiv version of \cite{Kim2007ContinuityCA}. For other H$\ddot{\textrm{o}}$lder regularity in optimal transport, see \cite{Figalli2011HlderCA} and \cite{Guillen2012OnTL}. In \cite{Guillen2012OnTL}, the authors prove the H$\ddot{\textrm{o}}$lder regularity under a condition known as (QQ-conv), which is equivalent to (A3w) when the cost function is smooth, and the same authors prove analogous result for $\eqref{GJE}$ in \cite{Guillen2015PointwiseEA}. In reflector antenna design, some cases are done by various authors. See \cite{Gutirrez2014TheNF}, \cite{ABEDIN20161}, \cite{Gutirrez2019C1alphaestimatesFT}. For other regularity theory of $\eqref{GJE}$, see \cite{trudinger2012local}, \cite{Jhaveri2016PartialRO}, \cite{Guillen2015PointwiseEA}.
\section{Setting of the problem}

\subsection{Conditions for the generating function $G$}
To have the local holder regularity result, we need some structural conditions on the domains $X$ and $Y$ and on the generating function $G$. We first assume the regularity and monotonicity of the generating function : 
\begin{equation}
\label{Regular}
 G \in C^4(X \times Y \times \R) \tag{Regular}
\end{equation}
\begin{equation}
\label{Gmono}
 D_v G < 0 \tag{$G$-mono}
\end{equation}
From $\eqref{Gmono}$, we get a function $H : X \times Y \times \R \to \R$ that satisfies 
\begin{equation}
G(x,y,H(x,y,u)) = u.
\end{equation}
Note that the implicit function theorem ensures that $H \in C^4$ by $\eqref{Regular}$ and this, with $\eqref{Gmono}$, implies
\begin{equation}
\label{Hmono} \tag{$H$-mono}
D_u H < 0.
\end{equation}
As in optimal transport case, we need conditions like (A1), (A2)(sometimes these are called (twisted) and (non-deg) respectively) and (As) in \cite{Loeper2009OnTR}. But in some examples of $\eqref{GJE}$, the function $G$ does not satisfy these conditions on the whole domain $X \times Y \times \R$. Instead, there is a subset $\g \subset X \times Y \times \R$ on which the generating function $G$ satisfies conditions corresponding to (A1), (A2) and (As). Therefore, we assume that there is a subset $\g \subset X \times Y \times \R$ such that the following conditions hold :
\begin{equation}
\label{Gtwist}
(y,v) \mapsto (D_x G(x, \cdot , \cdot) , G(x,\cdot , \cdot)) \  \textrm{is injective on} \ \g_x \tag{$G$-twist}
\end{equation}
\begin{equation}
\label{G*twist}
x \mapsto -\frac{D_y G}{D_v G} (\cdot , y, v) \textrm{ is injective on } \g_{y,v} \tag{$G^*$-twist}
\end{equation}
\begin{equation}
\label{Gnondeg}
\det \left( D^2_{xy} G - D^2_{xv}G \otimes \frac{D_yG}{D_vG} \right) \neq 0 \  \textrm{on}\  \g \tag{$G$-nondeg}
\end{equation}
where $\g_x = \{ (y,v) | (x,y,v) \in \g \}$ and $\g_{y,v} = \{ x | (x,y,v) \in \g \}$. We will use $E$ to denote the matrix in the condition $\eqref{Gnondeg}$. Under these conditions, we can define the \emph{$G-$exponential function} on some subset of $T^*_xX$ and $T^*_yY$.

\begin{Def}
We define the $\gexp{x}{u}$ and $V_x$ by
\begin{displaymath}
\left\{ \begin{array}{rl}
D_xG(x, \gexp{x}{u}(p) , V_x(p,u)) & = p \\
G(x, \gexp{x}{u}(p) , V_x(p,u)) & = u
\end{array}
\right. .
\end{displaymath}
We call $\gexp{x}{u}$ the \emph{$G$-exponential function} with focus $(x,u)$. We define another exponential map $\gsexp{y}{v}$ by 
\begin{displaymath}
-\frac{D_y G}{D_v G} (\gsexp{y}{v}(q) , y, v) = q.
\end{displaymath}
We call $\gsexp{y}{v}$ the \emph{$G^*$-exponential function} with focus $(y,v)$.
\end{Def}
Note that by the implicit function theorem, the functions $\gexp{x}{u}, V_x(p,u)$ and $\gsexp{y}{v}$ are $C^3$ on the domain of each function. \\
In optimal transport case, the equations which are used to define the $c$-exponential maps and $c^*$-exponential maps are symmetric. In the above definition for the $G$-exponential function, the equations that we used to define $\gexp{x}{u}$ and $\gsexp{y}{v}$ do not look symmetric, but they actually play symmetric roles. See Remark 9.5 in \cite{Guillen2015PointwiseEA}.

\begin{Rmk}
\label{derivexp}
Let $I_{x,y} = \{ v \in \R | (x,y,v) \in \g \}$ and let $J_{x,y} = H(x,y,I_{x,y})$ and $ \h = \{ (x,y,u) | u \in J_{x,y} \}$. We will assume that $\h$ is open relative to $X \times Y \times \R$. Let $\h_{x,u} = \{ y \in Y | (x,y,u) \in \h \}$.
\begin{equation}
\label{Domopen}
\h \textrm{ is open relative to } X \times Y \times \R \tag{DomOpen}
\end{equation} 
We will denote the image of $\h_{x,u}$ under the map $D_xG(x, \cdot , H(x, \cdot, u))$ by $\h^*_{x,u}$. Then the $G$-exponential map $\gexp{x}{u}$ is defined on $\h^*_{x,u}$. Moreover, we get the expression $V_x(p,u) = H(x, \gexp{x}{u}(p) , u)$. With this expression, we can compute 
\begin{displaymath}
D_p \gexp{x}{u}(p) = E^{-1}(x, \gexp{x}{u}(p), V_x(p,u)).
\end{displaymath}
\end{Rmk}

To impose geometric conditions on the sets $X$ and $Y$, we define \emph{$G$-convexity} of sets.
\begin{Def}
\label{gconvex}
$X$ is said to be \emph{$G$-convex} if $\g_{y,v}^*$, which is the image of $\g_{y,v}$ under the map $\displaystyle -\frac{D_y G}{D_v G} (\cdot , y, v)$, is convex for any $(y,v) \in Y \times \R$. $Y$ is said to be \emph{$G^*$-convex} if $\h^*_{x,u}$ is convex for any $(x,u) \in X \times \R$.
\end{Def}
We also add convexity conditions to the supports of source and target.
\begin{equation}
\label{hDomConv}
X \textrm{ is } G\textrm{-convex} \tag{hDomConv}
\end{equation}
\begin{equation}
\label{vDomConv}
Y \textrm{ is } G^*\textrm{-convex} \tag{vDomconv}
\end{equation}

\begin{Def}
For $x \in X$ and $u \in \R$, let $y_0 , y_1 \in \h_{x,u}$ and let $p_i =D_x G(x, y_i, H(x, y_i, u))$. A \emph{$G$-segment} that connects $y_0$ and $y_1$ with focus $(x,u)$ is the image of $[p_0 , p_1]$ under the map $\gexp{x}{u}$
\begin{displaymath}
\{ \gexp{x}{u}((1-\theta)p_0 + \theta p_0) | \theta \in [0,1] \}.
\end{displaymath}
For $y \in Y$ and $v \in \R$, let $x_0, x_1 \in g_{y,v}$ and let $\displaystyle q_i =  -\frac{D_y G}{D_v G} (x_i , y, v)$. The \emph{$G^*$-segment} that connects $x_0$ and $x_1$ with focus $(y,v)$ is the image of $[q_0 , q_1]$ under the map $\gsexp{y}{v}$ : 
\begin{displaymath}
\{\gsexp{y}{v}((1-\theta)q_0 + \theta q_0)| \theta \in [0,1] \}.
\end{displaymath}
\end{Def}

\begin{Rmk}
The definition of $G$-convexity is different from the usual $G$-convex definition from the literature (See \cite{Guillen2015PointwiseEA}). The two definitions serve the same purpose, however, as these convexity conditions are used to ensure that $G$-segments are well defined. For example, if $y_0, y_1 \in \h_{x,u}$ then the $G$-segment that connects $y_0$ and $y_1$ with focus $(x,u)$ is well defined by Definition $\ref{gconvex}$. In \cite{Kim2007ContinuityCA}, the authors define convexity of domains using a subset $W$ of $M \times \bar{M}$(Definition 2.5). The convexity we have defined coincides with their horizontal and vertical convexity of $W$ when the generating function is given by a cost function.
\end{Rmk}

In optimal transport, there is an important condition called (A3w). This condition is a sign condition of a 4-tensor that was first introduced in \cite{Ma2005RegularityOP}, and \cite{Trudinger2006OnTS} (the tensor is called the MTW tensor sometimes). In optimal transport, the tensor is defined by $D^2_{pp} \left( - D^2_{xx}c(x, exp^c_x(p))\right)$, and it has the coordinate expression
\begin{displaymath}
MTW_{ijkl} = \left( c_{ij,r}c^{r,s}c_{s,pq} - c_{ij,pq} \right)c^{p,k}c^{q,l}
\end{displaymath}
(See, for example, \cite{Ma2005RegularityOP}). Then (A3w) condition assume that $MTW[\xi, \xi, \eta, \eta] \geq 0$ for any $\xi \perp \eta$. (A3w) is used to show many regularity results. In \cite{Figalli2011HlderCA}, the arXiv version of \cite{Kim2007ContinuityCA}, and \cite{Guillen2012OnTL}, (A3w) is used to show the H$\ddot{\textrm{o}}$lder regularity of potential functions. In \cite{Philippis2012SobolevRF}, (A3w) is used to show the Sobolev regularity of potential functions. In \cite{Loeper2009OnTR} However, the author uses the strengthened condition (As) to show the H$\ddot{\textrm{o}}$lder regularity result. 
\begin{equation}
\label{As} \tag{As} MTW[ \xi, \xi, \eta, \eta] \geq \delta | \xi |^2 | \eta |^2 \textrm{ whenever } \xi \perp \eta, \textrm{ for some } \delta>0.
\end{equation}
In optimal transport, $\eqref{As}$ is equivalent to  
\begin{equation}
\label{As'} \tag{As'} MTW [ \xi, \xi, \eta, \eta ] > 0 \textrm{ for } \xi \perp \eta.
\end{equation}
Obviously $\eqref{As}$ implies $\eqref{As'}$. For the other implication, note that $X \times Y$ is compact. Then $\eqref{Regular}$ and $\eqref{Gnondeg}$ implies that there exists $\delta >0$ such that $\eqref{As}$ is true with the $\delta$. Since optimal transport is a special case of (GJE), we should impose a condition that corresponds to the condition $\eqref{As}$ to import the idea from \cite{Loeper2009OnTR}. Therefore, we assume the following condition.
\begin{equation}
\label{G3s} \tag{G3s}  D^2_{pp} \mathcal{A}(x,p,u)[\xi, \xi, \eta, \eta ]  > 0 \textrm{ for any } \xi \perp \eta
\end{equation}
where $\mathcal{A}(x,p,u) =  D^2_{xx} G\left( x, \gexp{x}{u}(p), V_x(p,u) \right)$, $\xi \in T_x X$ and $\eta \in T^*_xX$. Here $\xi \perp \eta$ means that the dual pairing $\eta(\xi)$ is 0. Note that $p$ must be in $\h^*_{x,u}$ to well-define $\mathcal{A}(x,p,u)$. Here we point out that, unlike optimal transport, $\eqref{G3s}$ is not equivalent to, but weaker than the following.
\begin{displaymath}
D^2_{pp} \mathcal{A}(x,p,u)[\xi, \xi, \eta, \eta ]  > \delta |\xi|^2 |\eta|^2 \textrm{ for any } \xi \perp \eta \textrm{ for some } \delta >0.
\end{displaymath}
This is because $\delta$ depends on $(x,p,u) \in \h^* = \bigcup_{(x,u) \in X \times \R} \h^*_{x,u}$ and $\h^*$ is not compact in general. We use $\eqref{G3s}$ in this paper. Then some arguments that use $\delta$ in $\eqref{As}$ from \cite{Loeper2009OnTR} can not be applied. We resolve this problem by localizing the argument. See Remark $\ref{nicedom}$.

In this paper, we have that $X$ and $Y$ are subsets of $\R^n$. Then we can identify $T_xX$ and $T^*_xX$ with $\R^n$, and the dual pairing with usual inner product in $\R^n$. As such we will often use $\R^n$ for the tangent space and the cotangent space. (G3w) condition is the same condition $\eqref{G3s}$, but with $\geq$ instead of $>$.

\begin{Rmk}
The condition (A3w) is known to be equivalent to the next inequality
\begin{displaymath}
-c(y,\bar{x}(t)) + c(x, \bar{x}(t)) \leq \max \{ -c(y,\bar{x}(0)) + c(x, \bar{x}(0)), -c(y,\bar{x}(1)) + c(x, \bar{x}(1)) \}
\end{displaymath}
where $\bar{x}(t)$ is the $c$-segment that connects $\bar{x}(0)$ and $\bar{x}(1)$ with focus $x$ (See \cite{Loeper2009OnTR}, \cite{Villani2003TopicsIO}). This is called Loeper's property. This has the following geometric meaning : If two $c$-affine functions, which have $c-$subdifferentials $\bar{x}(0)$ and $\bar{x}(1)$ respectively, meet at $x$, then the $c$-affine functions that pass though that point with $c$-subdifferential $\bar{x}(t)$ lies under max of the two original $c$-affine functions. Therefore, this property is called geometric Loeper's property (gLp) sometimes. In \cite{Loeper2009OnTR}, he uses \eqref{As} to get (gLp) in some quantitative way. This can be done in the $G$-convex setting as well. We obtain a property corresponding to (gLp) for $G$-affine functions, and along with a quantitative version of (gLp) using $\eqref{G3s}$ in the lemma $\ref{qglp}$.
\end{Rmk}

\subsection{$G-$convex functions}
The solutions of \eqref{GJE} belong to a class known as \emph{$G$-convex function}. We define the $G$-convexity of a function by generalizing the usual convexity for the $c$-convexity in optimal transport case :

\begin{Def}
A function $\phi : X \to \R$ is said to be a \emph{$G$-convex function} if for any $x_0 \in X$ there exists $y_0 \in Y$ and $v_0 \in \R$ such that $(x_0 , y_0 , v_0 ) \in \g$ and 
\begin{align}
\label{gsubineq}
\phi(x_0) & = G(x_0, y_0, v_0 )   \\
\phi(x) & \geq G(x , y_0, v_0 ) \quad  \forall x \in X. \nonumber
\end{align}
We say that $(y_0,v_0)$ is a \emph{$G$-focus} of $\phi$ at $x_0$ and the function $G(x,y_0, v_0)$ is a \emph{$G-$supporting function} of $\phi$ at $x_0$ with focus $(y_0,v_0)$. We define the \emph{$G$-subdifferential} of $\phi$ at $x_0$ by 
\begin{displaymath}
\gsub{\phi}{x_0} = \{ y \in Y | (y,v) \textrm{ is a } G\textrm{-focus of } \phi \textrm{ at } x_0 \textrm{ for some } v \in \R\}
\end{displaymath}
and we define $ \displaystyle \gsub{\phi}{A} = \bigcup_{x \in A} \gsub{\phi}{x}$.
\end{Def}
With the definition above and Remark $\ref{derivexp}$, we can express $v_0$ as follows :
\begin{displaymath}
v_0 = H(x_0 , y_0 , \phi(x_0)).
\end{displaymath}
Therefore, the $G$-supporting function of a $G$-convex function $\phi$ at $x_0$ with $G$-sudifferential $y_0$ can be expressed as $G(x, y_0, H(x_0, y_0, \phi(x_0)))$. Also, note that $(x_0, y_0, v_0) \in \g$ is equivalent to $(x_0,y_0, \phi(x_0)) \in \h$. We will use this fact often. We call the function of the form $G( \cdot, y, v)$, \emph{$G$-affine function} with focus $(y,v)$, or simply a \emph{$G$-affince function}.

\begin{Rmk}
\label{rmkonunif}
The conditions on the generating function $G$ imply $G$-convexity of the $G$-subdifferential at a point which in turn implies the following proposition :

\begin{Prop}
\label{loctoglob}
If a $G$-affine function $G(\cdot , y_0 , v_0 )$ supports a $G$-convex function $\phi$ at $x_0$ locally, i.e.
\begin{displaymath}
\begin{array}{c}
\phi(x_0) = G(x_0, y_0, v_0) \\
\phi(x) \geq G(x, y_0, v_0) \textrm{ on some neighborhood of } x_0
\end{array}
\end{displaymath}
and if $(x_0, y_0, v_0) \in \g$, then $y_0 \in \gsub{\phi}{x_0}$.
\end{Prop}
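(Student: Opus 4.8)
The plan is to follow the two‑step structure indicated in the Remark just above the statement: first establish that $\gsub{\phi}{x_0}$ is a $G$‑convex subset of $Y$, and then deduce the proposition from this by a one‑parameter continuity argument along a $G$‑segment. I expect the first step to be a fairly direct consequence of Loeper's property for $G$‑affine functions, and the second step, in particular its final part, to be the real work.

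For the first step I would fix $x_0$, write $u_0=\phi(x_0)$, and take $y_a,y_b\in\gsub{\phi}{x_0}$ with foci $v_a=H(x_0,y_a,u_0)$ and $v_b=H(x_0,y_b,u_0)$, so that $G(\cdot,y_a,v_a)$ and $G(\cdot,y_b,v_b)$ globally support $\phi$, touch it at $x_0$, and $(x_0,y_a,v_a),(x_0,y_b,v_b)\in\g$; in particular $y_a,y_b\in\h_{x_0,u_0}$. By \eqref{vDomConv} the set $\h^*_{x_0,u_0}$ is convex, so the $G$‑segment $(y_\theta,v_\theta)$ with focus $(x_0,u_0)$ joining $y_a$ to $y_b$ is well defined, remains inside $\h_{x_0,u_0}$ — hence $(x_0,y_\theta,v_\theta)\in\g$ — and satisfies $G(x_0,y_\theta,v_\theta)=u_0$ for every $\theta$ by construction. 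Applying the $G$‑affine version of (gLp) along this segment gives $G(x,y_\theta,v_\theta)\le\max\{G(x,y_a,v_a),G(x,y_b,v_b)\}\le\phi(x)$ for all $x\in X$, and together with the equality at $x_0$ this shows that $(y_\theta,v_\theta)$ is a $G$‑focus of $\phi$ at $x_0$, i.e. $y_\theta\in\gsub{\phi}{x_0}$; so $\gsub{\phi}{x_0}$ is $G$‑convex.

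For the second step, let $(y_0,v_0)$ be as in the statement; then $G(x_0,y_0,v_0)=\phi(x_0)=u_0$ and $(x_0,y_0,v_0)\in\g$ give $y_0\in\h_{x_0,u_0}$. Choose any $\tilde y\in\gsub{\phi}{x_0}$ with focus $\tilde v$, and let $(y_\theta,v_\theta)$, $\theta\in[0,1]$, be the $G$‑segment with focus $(x_0,u_0)$ from $y_0$ to $\tilde y$; it is well defined by \eqref{vDomConv}, with $(x_0,y_\theta,v_\theta)\in\g$ and $G(x_0,y_\theta,v_\theta)=u_0$ throughout. Consider $\Theta=\{\theta\in[0,1]:G(\cdot,y_\theta,v_\theta)\le\phi\text{ on }X\}$. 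One checks that $1\in\Theta$; that $\Theta$ is closed, since for each fixed $x$ the map $\theta\mapsto G(x,y_\theta,v_\theta)$ is continuous and the pointwise inequalities pass to the limit; and that, by the first step applied to the sub‑$G$‑segments of the one above, $\Theta$ is an interval. Hence $\Theta=[\theta^*,1]$ with $\theta^*=\inf\Theta\in\Theta$, and it remains only to show $\theta^*=0$, for then $G(\cdot,y_0,v_0)\le\phi$ on $X$ and $y_0\in\gsub{\phi}{x_0}$. To exclude $\theta^*>0$ I would use, for the first time, that $G(\cdot,y_0,v_0)$ supports $\phi$ only \emph{locally}, together with the strengthened condition \eqref{G3s}: the latter makes $\theta\mapsto G(x,y_\theta,v_\theta)$ strictly convex with a positive second‑order gain at every $x\neq x_0$, quantified in Lemma \ref{qglp}, and since $G(\cdot,y_0,v_0)\le\phi$ on a whole ball $B$ about $x_0$, every point at which some $G(\cdot,y_\theta,v_\theta)$ exceeds $\phi$ must lie in $X\setminus B$, where the gain is bounded below. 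Feeding this gain into the convexity estimate $G(\cdot,y_\theta,v_\theta)\le(1-\theta)G(\cdot,y_0,v_0)+\theta\,G(\cdot,\tilde y,\tilde v)-(\text{gain})$ together with $G(\cdot,\tilde y,\tilde v)\le\phi$ on $X$ should then force $G(\cdot,y_\theta,v_\theta)\le\phi$ on $X$ for $\theta$ just below $\theta^*$, contradicting $\theta^*=\inf\Theta$.

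The hard part will be this last step, and it is exactly where the argument must diverge from the optimal transport proof of \cite{Loeper2009OnTR}. The difficulty is that \eqref{G3s}, Lemma \ref{qglp}, and the Lipschitz‑type bounds on $\phi$ and on the relevant $G$‑affine functions are only available on compact subsets of $\g$, while the constant in the strong form of \eqref{G3s} is not uniform over the non‑compact set $\h^*$ (see Remark \ref{nicedom}). Before the comparison above can be run, one must therefore arrange that every point $(x_0,y_\theta,v_\theta)$ and $(x,y_\theta,v_\theta)$ that enters the estimates lies in a single fixed nice compact subset of $\g$ — which is the role of Lemma \ref{localniceg} — and extract the quantitative gain there. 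Getting this localization to interact cleanly with the globally‑stated conclusion is the delicate point on which the proof rests.
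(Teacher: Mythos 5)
Before assessing the details: the paper does not actually prove Proposition \ref{loctoglob} internally. Remark \ref{rmkonunif} defers the proof to Corollary 4.24 of \cite{Guillen2015PointwiseEA} and only checks that the hypotheses (unif)/(nice) used there may be weakened to (unifw)/(nicew), because the $G$- and $G^*$-segments appearing in that proof remain well defined under \eqref{hDomConv} and \eqref{vDomConv} and the relevant derivative norms are controlled on the compact sets $\Phi$ and $\Psi$. Your outline (first prove $G$-convexity of $\gsub{\phi}{x_0}$, then run a sliding argument along a $G$-segment) does match the architecture of the cited proof, and your step 1 is fine \emph{modulo} the global Loeper maximum principle $G(x,y_\theta,v_\theta)\le\max\{G(x,y_0,v_0),G(x,y_1,v_1)\}$ for \emph{all} $x\in X$ — which this paper never establishes; Lemma \ref{qglp} is only its local, quantitative shadow, valid for $|x-x_m|\le C\epsilon$.

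The genuine gap is in your exclusion of $\theta^*>0$. Two things go wrong. First, the inequality $G(\cdot,y_\theta,v_\theta)\le(1-\theta)G(\cdot,y_0,v_0)+\theta\,G(\cdot,\tilde y,\tilde v)-(\text{gain})$ is not available anywhere: (gLp) controls $G(\cdot,y_\theta,v_\theta)$ by the \emph{maximum} of the endpoints, not a convex combination, and the gain in Lemma \ref{qglp} is $\delta_0\theta(1-\theta)|y_1-y_0|^2|x-x_m|^2-\gamma|x-x_m|^3$, which vanishes to second order at $x_m$ and is overwhelmed by the cubic error once $|x-x_m|$ is of order $\delta_0|y_1-y_0|^2/\gamma$; the lemma is simply not valid on $X\setminus B$, so the claim that ``the gain is bounded below'' there is false. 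Second, even granting the global maximum principle, the bound $G(x,y_s,v_s)\le\max\{G(x,y_0,v_0),G(x,\tilde y,\tilde v)\}$ is useless exactly on $X\setminus B$, where $G(\cdot,y_0,v_0)$ is not known to lie below $\phi$ — so neither ingredient, nor their combination, forces $G(\cdot,y_s,v_s)\le\phi$ for $s$ just below $\theta^*$. The real obstruction, which your sketch never engages, is a possible second contact point $\bar x\ne x_0$ of $G(\cdot,y_{\theta^*},v_{\theta^*})$ with $\phi$, toward which the bad points of $G(\cdot,y_s,v_s)$ accumulate as $s\uparrow\theta^*$; ruling this out (and verifying along the way that all triples $(x,y_\theta,v_\theta)$ used remain in $\g$) is the hard content of the local-to-global theorem and is precisely what is carried out in \cite{Guillen2015PointwiseEA}. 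As written, your final step is not a proof but a restatement of the difficulty with a mechanism that does not close.
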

\noindent
For example, it is proved in \cite{Guillen2015PointwiseEA} (Corollary 4.24), but under an extra condition on $J_{x,y}$, namely (unif) and the authors require the solution of the \eqref{GJE} to be ``nice''. 
\begin{flushleft}
\begin{tabular}{rl}
(unif) & $\exists \underline{u} , \overline{u}$ such that $[\underline{u} , \overline{u} ] \subset J_{x,y}$ for any $x \in X$ and $y \in Y$. \\
(nice) & The solution $\phi$ to the $\eqref{GJE}$  lies in $( \underline{u}, \overline{u} )$ :  $\phi(x) \in ( \underline{u}, \overline{u} ), \forall x \in X$. 
\end{tabular}
\end{flushleft}
In \cite{Guillen2015PointwiseEA}, the authors used these conditions for two reasons. First, they use these to check that the $G$-segments they are using are well-defined. Under these conditions, if $u \in ( \underline{u}, \overline{u} )$, then $(x,y, u) \in \h$ for any $x \in X$ and $y \in Y$ so that (DomConv*) assures that the $G$-segement for any pair of points in $Y$ with focus $(x,u)$ can be defined. Second, they get a compact set $X \times Y \times [ \underline{u}, \overline{u} ]$ that lies inside $\h$ with (unif). Then the norms of derivatives of the functions $G$ and $H$ will be bounded, and will be bounded away from 0 if it is signed. For the local regularity, we do not need the fixed interval $[ \underline{u}, \overline{u} ]$ inside $J_{x,y}$ for any $x \in X$ and $y \in Y$. Instead, we will use the following weaker conditions :
\begin{flushleft}
\begin{tabular}{rp{10cm}}
(unifw) & $\exists a , b : X \times Y \to \R$ which are continuous and $[a(x,y) , b(x,y)] \subset J_{x,y}$ for any $x \in X$ and $y \in Y$. \\
(nicew) & The solution $\phi$ to the (GJE) lies in $(a, b)$ : $\phi(x) \in (a(x,y) , b(x,y))$ for any $x \in X$ and $y \in \gsub{\phi}{x}$.
\end{tabular}
\end{flushleft}
Note that the $G$-segments used in \cite{Guillen2015PointwiseEA} to show Proposition $\ref{loctoglob}$ can be well-defined by $\eqref{hDomConv}$ and $\eqref{vDomConv}$. In fact, the $G$-segments constructed in \cite{Guillen2015PointwiseEA} connect two points in $\gsub{\phi}{x} \subset \h_{x, \phi(x)}$. Hence $\eqref{vDomConv}$ is enough to define the $G$-segments. With similar reasoning, $\eqref{hDomConv}$ ensures that the $G^*$-segments that are used are well-defined. Moreover, the sets 
\begin{equation}
\begin{array}{c}
\Phi := \{ (x,y,u) | u \in [a(x,y),b(x,y)] \} \subset \h \\
\Psi := \{ (x,y,v) | v \in H(x,y,[a(x,y),b(x,y)]) \}
\end{array}
\end{equation}
are compact. Note that $\eqref{hDomConv}$ and $\eqref{vDomConv}$ do not imply that $(x,\yth, u) \in \Phi$, where $\yth$ is the $G-$segment connecting $y_0$ and $y_1$ with focus $(x,u)$, so we can not bound norms of $G$ and its derivatives on a $G$-segment only using $\Phi$. But we know that $\Phi$ lies in a compact set $X \times Y \times [ \min a , \max b ]$, and $\Psi$ lies in the corresponding compact set. Therefore, we use norms on these compact sets that contain $\Phi$ and $\Psi$ and we still can use the same proof of the proposition $\ref{loctoglob}$ with assumption (unifw) and (nicew) instead of (unif) and (nice).
\end{Rmk}

\begin{Rmk}
\label{nicedom}
For a compact subset $S$ of $\h$, the condition \eqref{Gnondeg} implies that we have a constant $C_e$ that depends on $S$ such that
\begin{equation}
\label{Ce1}
\frac{1}{C_e} \leq \|E\| \leq C_e 
\end{equation}
where $\|E\|$ is the operator norm of $E$. This with Remark $\ref{derivexp}$ implies that
\begin{equation}
\label{Ce2}
\frac{1}{C_e} |p_1 - p_0| \leq |\gexp{x}{u}(p_1) - \gexp{x}{u}(p_0)| \leq C_e |p_1 - p_0|
\end{equation}
when $(x, \gexp{x}{u}((1-\theta)p_0 + \theta p_1) , u) \in S, \forall \theta \in [0,1]$. Also, the (G3s) condition implies that we have a constant $\alpha$ that depends on $S$ such that
\begin{equation}
\label{alpha}
D^2_{pp} \mathcal{A} (x,p,u) [\xi, \xi, \eta, \eta ] > \alpha
\end{equation}
for any unit $\xi$ and $\eta$ such that $\xi \perp \eta$ if $(x, \gexp{x}{u}(p), u) \in S$. Note that by tensoriality, we have
\begin{equation}
D^2_{pp} \mathcal{A} (x,p,u) [\xi, \xi, \eta, \eta ] > \alpha|\xi|^2|\eta|^2
\end{equation}
for non unit $\xi$ and $\eta$.
Moreover, compactness of $X \times Y \times [\min a, \max b]$ implies that we have $\beta >0$ such that
\begin{equation}
\label{beta}
D_v G < - \beta \textrm{ on } X \times Y \times [\min a, \max b].
\end{equation}
\end{Rmk}

With these conditions, we show some simple propositions for the $G-$subdifferential of $\phi$.  We will denote the $r$ neighborhood of a set $A$ by $\nbhd{r}{A}$.

\begin{Prop}
\label{compgsub}
The subdifferential of $\phi$ at a point $x$ is a closed subset of $Y$ and it lies compactly in $\h_{x, \phi(x)}$ : 
\begin{displaymath}
\gsub{\phi}{x} \Subset \h_{x, \phi(x)} \subset Y.
\end{displaymath}
\end{Prop}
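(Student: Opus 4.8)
The plan is to split the statement into the inclusion $\gsub{\phi}{x}\subset\h_{x,\phi(x)}$, which is essentially immediate from the definitions, and the closedness of $\gsub{\phi}{x}$, which together with the boundedness of $Y$ upgrades the inclusion to a compact containment and is where the real work lies. For the inclusion, if $y\in\gsub{\phi}{x}$ then by definition there is $v$ with $(x,y,v)\in\g$ and $\phi(x)=G(x,y,v)$; since $(x,y,v)\in\g$ is equivalent to $(x,y,\phi(x))\in\h$ and $\h=\{(x,y,u):u\in J_{x,y}\}$, this says precisely $y\in\h_{x,\phi(x)}$. Along the way $\eqref{Gmono}$ forces $v=H(x,y,\phi(x))$, so the $G$-focus of $\phi$ at $x$ lying over a given $y$ is unique; I would carry this representation of the focus through the rest of the argument.

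For the closedness, let $y_k\in\gsub{\phi}{x}$ with $y_k\to y\in Y$, and set $v_k:=H(x,y_k,\phi(x))$, so that $(x,y_k,v_k)\in\g$, $G(x,y_k,v_k)=\phi(x)$ and $G(\,\cdot\,,y_k,v_k)\le\phi$ on $X$. Continuity of $H$ gives $v_k\to v:=H(x,y,\phi(x))$, and passing to the limit $k\to\infty$ in these relations (the inequality being tested at each fixed point of $X$) yields $G(x,y,v)=\phi(x)$ and $G(\,\cdot\,,y,v)\le\phi$ on $X$. The only missing ingredient for $y\in\gsub{\phi}{x}$ is then $(x,y,v)\in\g$, i.e.\ $\phi(x)\in J_{x,y}$, and this is exactly what (nicew) delivers in the limit: for each $k$ it gives $a(x,y_k)<\phi(x)<b(x,y_k)$, continuity of $a$ and $b$ from (unifw) passes this to $\phi(x)\in[a(x,y),b(x,y)]$, and $[a(x,y),b(x,y)]\subset J_{x,y}$ by (unifw). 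Hence $(y,v)$ is a $G$-focus of $\phi$ at $x$ and $y\in\gsub{\phi}{x}$.

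Combining the two, $\gsub{\phi}{x}$ is a closed subset of the compact set $Y$, hence compact, and it lies inside $\h_{x,\phi(x)}$, which is open in $Y$ by $\eqref{Domopen}$; a compact subset of an open set is compactly contained, so $\gsub{\phi}{x}\Subset\h_{x,\phi(x)}$. (That $\gsub{\phi}{x}$ is nonempty follows from $G$-convexity of $\phi$, though this is not needed for the statement.) The one genuine obstacle is that $\g$ and $\h$ are only assumed open, not closed, so a naive closure argument would break down exactly on $\partial\g$; the role of (nicew), read through the continuous barriers $a$ and $b$ of (unifw), is precisely to keep the $G$-foci from escaping to that boundary under limits.
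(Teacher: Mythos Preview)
Your proof is correct and follows essentially the same approach as the paper: first use (nicew) together with the continuity of $a,b$ from (unifw) to show that the limit point $y$ satisfies $\phi(x)\in[a(x,y),b(x,y)]\subset J_{x,y}$, then pass the supporting inequality $G(\,\cdot\,,y_k,H(x,y_k,\phi(x)))\le\phi$ to the limit, and finally combine closedness of $\gsub{\phi}{x}$ in the compact $Y$ with the relative openness of $\h_{x,\phi(x)}$ coming from \eqref{Domopen}. The only cosmetic difference is that you record the inclusion $\gsub{\phi}{x}\subset\h_{x,\phi(x)}$ at the outset and track $v_k=H(x,y_k,\phi(x))$ explicitly, whereas the paper states the inclusion after closedness.
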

Note that Proposition $\ref{compgsub}$ for optimal transport case is much easier to prove because $J_{x,y} =\R$ for any $(x,y) \in X \times Y$ i.e. $\h = X \times Y \times \R$ so that we only need to check the inequality ($\ref{gsubineq}$). But in \eqref{GJE} case, compactness is not trivial since showing the inequality ($\ref{gsubineq}$) is not enough, and we need to check $(x, y, \phi(x)) \in \h$.
\begin{proof}
First we show that $\gsub{\phi}{x}$ is closed. Suppose $y \in \overline{\gsub{\phi}{x}}$. Then there exists a sequence $y_i \in \gsub{\phi}{x}$ that converges to $y$. Then from (unifw) and (nicew), we have 
\begin{displaymath}
\begin{array}{rl}
\phi(x) & \in \bigcap_{i=1}^{\infty} (a(x, y_i) , b(x, y_i)) \\
& \subset [ \sup a(x, y_i), \inf b(x, y_i) ] \\
& \displaystyle \subset \left[ \lim_{i \to \infty} a(x, y_i), \lim_{i \to \infty} b(x, y_i) \right] \\
& = [a(x,y), b(x,y)].
\end{array}
\end{displaymath}
This implies that $(x,y,\phi(x)) \in \h$ by (unifw). Moreover, from the definition of $G-$subdifferential, we have
\begin{displaymath}
G(z, y_i, H(x,y_i, \phi(x))) \leq \phi(z), \ \forall z \in X.
\end{displaymath}
Taking $i \to \infty$, we get
\begin{displaymath}
G(z, y, H(x, y, \phi(x))) \leq \phi(z), \ \forall z \in X.
\end{displaymath}
This inequality with $(x,y,\phi(x)) \in \h$ shows that $y \in \gsub{\phi}{x}$. Therefore $\gsub{\phi}{x}$ is closed, and hence compact as a closed subset of $Y$. Now note that $\gsub{\phi}{x} \subset \h_{x, \phi(x)}$. Moreover, from the openness assumption of $\h$ (Remark $\ref{derivexp}$), $\h_{x,\phi(x)}$ is relatively open with respect to $Y$. Therefore compactness of $\gsub{\phi}{x}$ shows that $\gsub{\phi}{x} \Subset \h_{x, \phi(x)}$.
\end{proof}

The next proposition is about continuity of the $G$-subdifferential of a $G$-convex function. Note that the $G$-subdifferential is a set valued function, so that the continuity here is not the ordinary continuity of a single valued function. But if the $G$-subdifferential is single valued, the next proposition actually coincides with the continuity of a single valued function.

\begin{Prop}
\label{gsubconti}
Let $\phi$ be a $G$-convex function with (nicew). Let $x \in X$. Then for $\epsilon >0$, there exists $\delta >0$ such that if $|z - x| \leq \delta$ then
\begin{displaymath}
\gsub{\phi}{z} \subset \nbhd{\epsilon}{\gsub{\phi}{x}}.
\end{displaymath}
\end{Prop}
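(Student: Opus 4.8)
The plan is to argue by contradiction, closely mirroring the standard upper-semicontinuity argument for subdifferentials but being careful about the two features specific to \eqref{GJE}: the scalar parameter $v$ (equivalently, the need to track $H(x,y,\phi(x))$), and the fact that membership requires $(x,y,\phi(x)) \in \h$ rather than just an inequality. So suppose the conclusion fails for some $\epsilon>0$: there is a sequence $z_i \to x$ and points $y_i \in \gsub{\phi}{z_i}$ with $y_i \notin \nbhd{\epsilon}{\gsub{\phi}{x}}$. Since $Y$ is compact we may pass to a subsequence with $y_i \to y_\infty \in Y$, and then $y_\infty \notin \nbhd{\epsilon/2}{\gsub{\phi}{x}}$ (in particular $y_\infty \notin \gsub{\phi}{x}$). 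The goal is to derive a contradiction by showing $y_\infty$ must in fact lie in $\gsub{\phi}{x}$.

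First I would record that each $y_i$ being a $G$-subdifferential at $z_i$ means, by definition and Remark~\ref{derivexp}, that with $v_i := H(z_i, y_i, \phi(z_i))$ we have $(z_i, y_i, v_i) \in \g$ and
\begin{displaymath}
\phi(z_i) = G(z_i, y_i, v_i), \qquad \phi(w) \geq G(w, y_i, v_i) \quad \forall w \in X.
\end{displaymath}
Next I would establish continuity of $\phi$ at $x$: a $G$-convex function is, locally, a supremum/infimum involving the $C^4$ function $G$ over a compact parameter set (the relevant foci stay in the compact set $\Psi$ from Remark~\ref{rmkonunif}, or one uses Proposition~\ref{compgsub} to confine the foci), hence $\phi$ is continuous, so $\phi(z_i) \to \phi(x)$. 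Then, using (nicew) applied at each $z_i$ together with the continuity of $a$ and $b$ from (unifw) — exactly as in the proof of Proposition~\ref{compgsub} — I would pass to the limit to get $\phi(x) \in [a(x,y_\infty), b(x,y_\infty)]$, hence $(x, y_\infty, \phi(x)) \in \h$ by (unifw). By \eqref{Hmono} and continuity of $H$, $v_i = H(z_i, y_i, \phi(z_i)) \to v_\infty := H(x, y_\infty, \phi(x))$, and $(x, y_\infty, v_\infty) \in \g$.

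Now I would pass to the limit in the two displayed relations: continuity of $G$ gives $\phi(x) = G(x, y_\infty, v_\infty)$ and $\phi(w) \geq G(w, y_\infty, v_\infty)$ for all $w \in X$. Together with $(x, y_\infty, v_\infty) \in \g$, this says precisely that $(y_\infty, v_\infty)$ is a $G$-focus of $\phi$ at $x$, so $y_\infty \in \gsub{\phi}{x}$ — contradicting $y_\infty \notin \nbhd{\epsilon/2}{\gsub{\phi}{x}}$. This proves the proposition.

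The main obstacle, and the place where the \eqref{GJE} setting departs from optimal transport, is the step showing $(x, y_\infty, \phi(x)) \in \h$ so that $v_\infty$ is well defined and the limiting triple stays in $\g$: in the optimal transport case $\h = X\times Y\times\R$ and this is vacuous, but here it genuinely uses (unifw) and (nicew) and the continuity of $a,b$, just as in Proposition~\ref{compgsub}. A secondary point requiring care is the continuity of $\phi$ itself — one should note that the $G$-supporting functions near $x$ have foci confined to a compact subset of $\h$ (via Proposition~\ref{compgsub} and upper semicontinuity of the multifunction $x \mapsto \gsub{\phi}{x}$, or more simply via the compact sets $\Phi,\Psi$ of Remark~\ref{rmkonunif}), so that $\phi$ is a locally uniform sup of equi-Lipschitz $G$-affine functions and hence continuous. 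Everything else is routine passage to the limit using \eqref{Regular} and \eqref{Hmono}.
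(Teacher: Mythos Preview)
Your proof is correct and follows essentially the same route as the paper's: argue by contradiction, extract a convergent subsequence $y_i \to y_\infty$ in the compact $Y$, use (nicew) together with continuity of $a,b$ and $\phi$ to ensure $(x,y_\infty,\phi(x))\in\h$, and then pass to the limit in the supporting inequality $\phi(w)\geq G(w,y_i,H(z_i,y_i,\phi(z_i)))$ to conclude $y_\infty\in\gsub{\phi}{x}$. The only differences are cosmetic --- you are somewhat more explicit about justifying the continuity of $\phi$ and about the limit step yielding $(x,y_\infty,\phi(x))\in\h$, whereas the paper compresses these into one-line remarks.
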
 
\begin{proof}
Suppose it is not true. Then we get a sequence $x_k \in X$ and $y_k \in \gsub{\phi}{x_k}$ such that $x_k \to x$ as $k \to \infty$ but $y_k \notin \nbhd{\epsilon}{\gsub{\phi}{x}}$ for any $k$. Since $Y$ is compact, we can extract a subsequence such that $y_k \to y$. Note that by (nicew), we have $(x,y,\phi(x)) \in {\Phi} \subset \h$. From the choice of $y_k$, we have $y \notin \nbhd{\epsilon}{\gsub{\phi}{x}}$. But since $y_k$ are in subdifferentials of $\phi$, we have
\begin{equation}
\label{prop3.4:1}
\phi(z) \geq G(z , y_k, H(x_k, y_k, \phi(x_k))), \ \forall z \in X.
\end{equation}
We can take limit on ($\ref{prop3.4:1}$) because $\phi$, $G$, and $H$ are continuous. Hence we get
\begin{displaymath}
\phi(z) \geq G(z,y,H(x,y,\phi(x)))
\end{displaymath}
which implies $y \in \gsub{\phi}{x}$, which contradicts to $y \notin \nbhd{\epsilon}{\gsub{\phi}{x}}$.
\end{proof}

\subsection{weak solutions to the (GJE)}
Let $\mu$ be the source measure, a probability measure supported on $X$, and let $\nu$ be the target measure, a probability measure supported on $Y$. Then we can interpret the solutions to the (GJE) with second boundary condition in the following ways, which are analogies of the Alexandrov solution and the Brenier solution in optimal transport case.

\begin{Def}
Let $\phi : X \to \R$ be a $G$-convex function. Then \\
1. $\phi$ is called a \emph{weak Alexandrov solution} to the (GJE) if
\begin{displaymath}
\mu(A) = \nu(\gsub{\phi}{A}), \ \forall A \subset X.
\end{displaymath}
2. $\phi$ is called a \emph{weak Brenier solution} to the (GJE) if
\begin{displaymath}
\nu(B) = \mu(\partial_G^{-1}\phi (B) ), \ \forall B \subset Y.
\end{displaymath}
\end{Def}

Note that the $G$-subdifferential of $\phi(x)$ is single valued $\mu-$a.e. in $X$ as it is semi-convex by Proposition $\ref{semiconvex}$. Hence a weak Brenier solution satisfies the push-forward condition $\partial_G \phi _\sharp \mu_0 = \mu$. In optimal transport, it is well known that the Brenier solution is not necessarily an Alexandrov solution. To prevent this, we need some convexity condition on the domains. See 4.6 in \cite{Figalli2017TheME} for the Monge-Amp$\grave{\textrm{e}}$re case, \cite{Ma2005RegularityOP} for the general optimal transport case. To develop the local regularity theory in this paper, we will use a weak Alexandrov solution.

\subsection{conditions for measures and main result}
We will assume the target measure $\nu$ is bounded away from 0 and $\infty$ with respect to the Lebesgue measure on $Y$. For the local regularity theory, we will assume one of the following assumptions : \\
\begin{tabular}{c p{10cm}}
1. & $\exists p \in (n,\infty]$ and $C_{\mu}$ such that for any $x \in X$ and $r \geq 0$ we have $\mu( B_r(x) ) \leq C_{\mu} r^{n(1-\frac{1}{p})}$. \\
2. & $\exists f : \R^{+} \to \R^{+}$ such that $\lim_{r \to 0} f(r) = 0$ and for any $x \in X$ and $r \geq 0$ we have $\mu(B_r(x))\leq f(r)r^{n(1-\frac{1}{n})}$.
\end{tabular}\\
Then the main theorem of this paper is the following

\begin{Thm}
\label{main}
Suppose $X$ and $Y$ are compact domains in $\R^n$ and let $G : X \times Y \times \R \to \R$ be the generating function. Let $\mu$ and $\nu$ be probability measures on $X$ and $Y$ respectively. Assume that $G$ satisfies \eqref{Regular}, \eqref{Gmono}, \eqref{Gtwist}, \eqref{G*twist}, \eqref{Gnondeg}, \eqref{G3s}, and (unifw). Assume also that $X$ and $Y$ satisfy \eqref{hDomConv} and \eqref{vDomConv} and the target measure $\nu$ is bounded away from 0 and $\infty$ with respect to the Lebesgue measure on $Y$. Let $\phi$ be an weak Alexandrov solution to the equation ($\ref{GJE}$) that satisfies (nicew). Then we have the followings : \\
\begin{tabular}{c p{10cm}}
1. & If there exist $p \in (n, \infty]$ and $C_{\mu}$ such that $\mu( B_r(x)) \leq C_{\mu} r^{n(1-\frac{1}{p})}$ for all $r \geq 0$, $x \in X$, then $\phi \in C^{1,\sigma}_{loc}(X)$. \\
2. & If there exist $f : \R^{+} \to \R^{+}$ such that $\lim_{r \to 0} f(r) = 0$ and $\mu(B_r(x)) \leq f(r)r^{n(1-\frac{1}{n})}$ for all $r \geq 0$, $x \in X$, then $\phi \in C^{1}_{loc}(X)$.
\end{tabular}\\
Here, $\rho = 1- \frac{n}{p}$ and $\sigma = \frac{\rho}{4n-2+\rho}$.
\end{Thm}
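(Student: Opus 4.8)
The plan is to follow \cite{Loeper2009OnTR} and the arXiv version of \cite{Kim2007ContinuityCA} and reduce everything to a single quantitative estimate that bounds the Lebesgue measure of $\gsub{\phi}{\bar{B}_r(x_0)}$ from below by an explicit increasing function of the oscillation of the $G$-subdifferential over $\bar{B}_r(x_0)$; this is then confronted with the weak Alexandrov identity $\mu(A) = \nu(\gsub{\phi}{A})$ and the growth hypothesis on $\mu$. First I would localize: it suffices to prove the asserted regularity on an arbitrary $X' \Subset X$, and by Propositions \ref{compgsub} and \ref{gsubconti} and compactness of $X'$ there is a single compact $S \Subset \h$ containing all triples $(x,y,\phi(x))$ with $x$ in a neighborhood of $X'$ and $y \in \gsub{\phi}{x}$; Lemma \ref{localniceg} keeps inside such a set the auxiliary points produced in the construction below. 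On $S$ (and on the compact sets of Remark \ref{rmkonunif}), Remark \ref{nicedom} provides fixed constants $C_e$, $\alpha$, $\beta$, with which all subsequent estimates are made.

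Set $\omega(x_0,r) := \mathrm{diam}\,\gsub{\phi}{\bar{B}_r(x_0)}$. I claim it suffices to prove: in case 1, $\omega(x_0,r) \le C r^{\sigma}$ for $x_0 \in X'$ and $r$ small; in case 2, $\sup_{x_0 \in X'}\omega(x_0,r) \to 0$ as $r \to 0$. Indeed, $\phi$ is semi-convex by Proposition \ref{semiconvex}, hence locally Lipschitz and differentiable a.e., and at a point of differentiability $\gsub{\phi}{x} = \{y(x)\}$ with $D\phi(x) = D_xG(x,y(x),H(x,y(x),\phi(x)))$; letting $r \to 0$ in the modulus bound forces $\gsub{\phi}{x_0}$ to be a single point for \emph{every} $x_0 \in X'$, so $y(\cdot)$ is a well-defined selection which the modulus bound makes continuous (case 2), resp.\ $C^{0,\sigma}$ (case 1). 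Since $G,H \in C^3$ and $\phi$ is Lipschitz, $D\phi(\cdot) = D_xG(\cdot,y(\cdot),H(\cdot,y(\cdot),\phi(\cdot)))$ is then continuous, resp.\ $C^{0,\sigma}$, and with the local Lipschitz bound this gives $\phi \in C^1_{loc}(X)$, resp.\ $\phi \in C^{1,\sigma}_{loc}(X)$.

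The hard part is the lower volume bound. Assume $\omega(x_0,r) \ge d > 0$ and pick $x_0',x_1' \in \bar{B}_r(x_0)$ with $y_i \in \gsub{\phi}{x_i'}$ and $|y_0 - y_1| \ge d/2$. Using Proposition \ref{loctoglob} and the quantitative geometric Loeper property (Lemma \ref{qglp}), I would construct a family of $G$-affine functions — indexed by the interpolation parameter along the $G$-segment joining $y_0$ to $y_1$ and by a transversal direction — each lying below $\phi$ and touching it at a point of $\bar{B}_{C_1 r}(x_0)$ (the interpolant dips below $\max\{G(\cdot,y_0,v_0),G(\cdot,y_1,v_1)\}$ by $O(r^2)$, so lifting it through $D_vG<0$ until it touches $\phi$ moves the touching point by $O(r)$); by Proposition \ref{loctoglob} the corresponding foci lie in $\gsub{\phi}{\bar{B}_{C_1 r}(x_0)}$, and the $\alpha$-gain of Lemma \ref{qglp} together with \eqref{Ce2} shows these foci sweep out a set of measure
\begin{equation}
\label{plan:vol}
\vol{\gsub{\phi}{\bar{B}_{C_1 r}(x_0)}} \ \ge\ c_0\, \Theta(d,r),
\end{equation}
with $c_0, C_1 > 0$ and $\Theta$ explicit and strictly increasing in $d$. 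This is the main new difficulty compared with \cite{Loeper2009OnTR}: because $G$ depends on the scalar parameter $v$, every estimate on the ``cost'' part of Loeper's construction has to be carried out jointly with an estimate on the parameters $v_\theta$ along the $G$-segment (controlled via $\beta$), and one must verify through Lemma \ref{localniceg} that each point produced in the construction stays inside the compact set $S$ where $C_e$, $\alpha$, $\beta$ are valid.

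Finally I would close the loop. As $\phi$ is a weak Alexandrov solution, $\mu(\bar{B}_{C_1 r}(x_0) \cap X) = \nu(\gsub{\phi}{\bar{B}_{C_1 r}(x_0) \cap X})$, and since $\nu(E) \ge \lambda\,\vol{E}$ for Borel $E \subset Y$ (the lower Lebesgue bound on $\nu$), \eqref{plan:vol} yields $\lambda c_0\, \Theta(d,r) \le \mu(\bar{B}_{C_1 r}(x_0))$. In case 1 the right-hand side is $\le C_\mu (C_1 r)^{n(1-1/p)} = C_\mu(C_1 r)^{\,n-1+\rho}$; balancing this against $\Theta(d,r)$ and solving for $d$ reproduces $\omega(x_0,r) = d \le C r^{\sigma}$ with $\sigma = \frac{\rho}{4n-2+\rho}$. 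In case 2 it is $\le f(C_1 r)(C_1 r)^{\,n-1}$ with $f(C_1 r) \to 0$, forcing $\omega(x_0,r) \to 0$ uniformly in $x_0 \in X'$. In either case the reduction of the second paragraph applies and the theorem follows.
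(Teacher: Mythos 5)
Your strategy is the paper's own: a quantitative geometric Loeper property produces a lower bound on the volume of $\gsub{\phi}{B}$ for a suitable ball $B$ in terms of the oscillation $d$ of the subdifferential, which is then played against the Alexandrov identity $\mu(B)=\nu(\gsub{\phi}{B})$, the lower Lebesgue bound on $\nu$, and the growth hypothesis on $\mu$. Your reduction (single-valuedness of $\gsub{\phi}{\cdot}$ plus a modulus for the selection $y(\cdot)$ gives $C^1$ resp.\ $C^{1,\sigma}$ through $D\phi=D_xG(\cdot,y(\cdot),H(\cdot,y(\cdot),\phi(\cdot)))$ and semi-convexity) and your treatment of case 2 also match the paper.

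The gap is that your central estimate --- the lower volume bound with the function $\Theta(d,r)$ --- is never computed, and the exponent $\sigma=\frac{\rho}{4n-2+\rho}$, which is part of the statement, is asserted to follow from ``balancing'' rather than derived. Carrying the construction out forces a two-scale structure that your sketch suppresses: the ball that captures the touching points is not $\bar{B}_{C_1r}(x_0)$ with $r=|x_0'-x_1'|$, but a ball of the intermediate radius $R$ with $R^2\sim|x_0'-x_1'|/d$, because each interpolant must be lifted by an amount $\phi(x_t)-u\lesssim|x_0'-x_1'|\,d$ (Lemma \ref{localest}) and perturbed transversally by $\lesssim C_4Rl$, while the (gLp) gain on $\partial B_R(x_t)$ is only $\delta_0d^2R^2-\gamma R^3$; the admissible tube width is then $l\sim Rd^2$ and the correct $\Theta$ is $l^{n-1}d$. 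It is precisely these choices of $R$ and $l$ (as in (\ref{rlk}) and Lemmas \ref{gsubdiffest}, \ref{gsubvolest}) that produce $4n-2+\rho$; a single-scale balance gives a different exponent, so the claim that your $\Theta$ ``reproduces $\sigma$'' cannot be checked and, as written, would not. Three further steps are missing: (i) the cubic error $\gamma R^3$ is dominated only when $d\geq\kappa|x_0'-x_1'|^{1/5}$, so you need the dichotomy ``either $d\leq\max\{|x_0'-x_1'|,\kappa|x_0'-x_1'|^{1/5}\}$, which already gives the H\"older bound, or the construction applies''; (ii) ``lifting through $D_vG<0$ until it touches $\phi$'' requires the intermediate-value argument on $h\mapsto\max_{B_R(x_t)}\bigl(G(\cdot,y,H(x_t,y,h))-\phi\bigr)$ together with the verification, via Lemma \ref{localniceg}, that the resulting focus is an admissible point of $\h$ --- otherwise Proposition \ref{loctoglob} does not apply; (iii) the volume of the tube of foci must be estimated after intersecting with $Y$, which is where \eqref{vDomConv} and the convex-geometry Lemmas \ref{convvolball}--\ref{convvoltube} enter, since the tube may hug $\partial Y$ and lose a dimension of volume without the convexity of $\h^*_{x_0,\phi(x_0)}$.
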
 
Note that the exponents that appears in above theorem are the same as in \cite{Loeper2009OnTR}.

\section{Proof of local holder regularity}
For the proof of local H$\ddot{\textrm{o}}$lder regularity, we will do most computations in the set $X \times Y \times [\min a , \max b]$. Since this set is compact we will be able to get finite quantities in each computation with a localizing argument. Then we will be able to apply the idea from \cite{Loeper2009OnTR} for the proof of each lemma. The main difference  between \cite{Loeper2009OnTR} and this paper comes from the structural difference of a cost function and a generating function. In particular, a generating function has its own nice subdomain where the structural conditions hold true, whereas the corresponding conditions hold on the whole domain in optimal transport case. Therefore, we need to check that the points at which we use the conditions are in the nice subdomain. Moreover, each derivative of $G$ still depends on the scalar parameter $v$, hence we need to take care of extra terms that come from the dependency on the scalar parameter $v$. In addition, estimates on the cost function $c(x,y)$ should be done on $G(x,y,v)$. \\
Through out this paper, we will use tensor notation for derivatives many times. For example, we view $D^2_{xx}G$ as a 2-tensor, and we use square bracket ``[ , ]'' for tensor notation.
\begin{displaymath}
D^2_{xx}G [\xi, \xi] = \xi^t D^2_{xx}G \xi.
\end{displaymath}

\subsection{Quantitative (glp) with (G3s)}
We start with some estimation on $G$-affine functions. In this subsection, $x_m$ is a point in $X$, $u \in \R$, and $y_0, y_1 \in \h_{x_m, u}$. Also, for $\theta \in [0,1]$, we denote the $G$-segment that connects $y_0$ and $y_1$ with focus $(x_m,u)$ by $\yth$, and we use $\vth = H(x_m, \yth, u)$ and $p_i = D_xG(x_m,y_i,v_i)$. Moreover, we will assume that the points $(x_m, \yth, u) \in S$ for some compact set $S \Subset \h$. Then by remark$\ref{nicedom}$, we get constants $C_e$ and $\alpha$ that depend on $S$ and satisfy ($\ref{Ce1}$),($\ref{Ce2}$), and ($\ref{alpha}$).

\begin{Lem}
\label{2dest}
For some constant $C_1$ that depends on the $C^3$ norm of $G$, $C^1$ norm of $H$, and $C_e$, we have
\begin{equation}
\left| \left( D^2_{xx}G(x_m,y_{\theta},v_{\theta}) - D^2_{xx}G(x_m,y_{\theta'},v_{\theta'})\right) [\xi, \xi] \right| \leq C_1|\theta - \theta'||p_1 - p_0||\xi|^2
\end{equation}
\end{Lem}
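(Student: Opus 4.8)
The plan is to prove the estimate by viewing the map $\theta \mapsto D^2_{xx}G(x_m, \yth, \vth)[\xi,\xi]$ as a $C^1$ function of $\theta$ on $[0,1]$ and bounding its derivative uniformly. By the mean value theorem it then suffices to show
\begin{displaymath}
\left| \frac{d}{d\theta} D^2_{xx}G(x_m, \yth, \vth)[\xi,\xi] \right| \leq C_1 |p_1 - p_0| |\xi|^2
\end{displaymath}
for all $\theta \in [0,1]$, after which integrating from $\theta$ to $\theta'$ gives the claim. The key point is that the whole $G$-segment $\{(x_m, \yth, \vth)\}$ lies in the fixed compact set $S \Subset \h$ (this is part of the standing hypotheses of the subsection), so all derivatives of $G$ up to order $3$ and all derivatives of $H$ up to order $1$ that appear are bounded by their sup-norms on $S$ (or on the ambient compact set $X \times Y \times [\min a, \max b]$), and $\|E\|, \|E^{-1}\|$ are controlled by $C_e$ via Remark~\ref{nicedom}.

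The main computational step is the chain rule. Writing $\dot{y}_\theta = \frac{d}{d\theta}\yth$ and $\dot{v}_\theta = \frac{d}{d\theta}\vth$, we get
\begin{displaymath}
\frac{d}{d\theta} D^2_{xx}G(x_m,\yth,\vth)[\xi,\xi] = D^3_{xxy}G[\xi,\xi,\dot{y}_\theta] + D^3_{xxv}G[\xi,\xi]\,\dot{v}_\theta,
\end{displaymath}
so the estimate reduces to bounding $|\dot{y}_\theta|$ and $|\dot{v}_\theta|$ by a constant multiple of $|p_1 - p_0|$. For $\dot{y}_\theta$: by definition of the $G$-segment, $\yth = \gexp{x_m}{u}((1-\theta)p_0 + \theta p_1)$, so $\dot{y}_\theta = D_p\gexp{x_m}{u}\big((1-\theta)p_0+\theta p_1\big)(p_1 - p_0) = E^{-1}(p_1 - p_0)$ by Remark~\ref{derivexp}, whence $|\dot{y}_\theta| \leq C_e |p_1 - p_0|$ by \eqref{Ce1}. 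For $\dot{v}_\theta$: since $\vth = H(x_m, \yth, u)$, differentiating gives $\dot{v}_\theta = D_y H(x_m, \yth, u)\cdot \dot{y}_\theta$, so $|\dot{v}_\theta| \leq \|D_y H\|_{C^0} \, C_e |p_1 - p_0|$. Combining, the derivative in $\theta$ is bounded by $\big(\|D^3 G\|_{C^0} C_e + \|D^3 G\|_{C^0} \|D H\|_{C^0} C_e\big)|p_1 - p_0||\xi|^2$, which defines $C_1$ with the stated dependencies.

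I do not expect any genuine obstacle here; the only things to be careful about are bookkeeping ones. First, one must make sure $(x_m, \yth, \vth) \in S$ (equivalently $(x_m, \yth, u) \in S$) for \emph{all} $\theta \in [0,1]$, not just the endpoints, so that the sup-norms and the bound on $\|E^{-1}\|$ are legitimate along the entire segment — but this is exactly the standing assumption of the subsection, so nothing new is needed. Second, one should note that $\gexp{x_m}{u}$ is $C^3$ (hence $D_p \gexp{x_m}{u}$ is continuous) and $G \in C^4$, $H \in C^4$, so the composite $\theta \mapsto D^2_{xx}G(x_m,\yth,\vth)[\xi,\xi]$ is indeed $C^1$ and the mean value theorem applies. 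Finally, the tensoriality in $\xi$ is immediate since everything is quadratic in $\xi$, giving the clean $|\xi|^2$ factor.
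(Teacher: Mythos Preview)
Your proposal is correct and follows essentially the same approach as the paper: both arguments rewrite $\vth = H(x_m,\yth,u)$, apply the chain rule to pick up the factors $\|D^3_{xxy}G\|$ and $\|D^3_{xxv}G\|\|D_yH\|$, and then use $|\dot y_\theta|\le C_e|p_1-p_0|$ (equivalently $|y_\theta - y_{\theta'}|\le C_e|\theta-\theta'||p_1-p_0|$ from \eqref{Ce2}) to finish. The paper simply compresses your mean-value-theorem computation into a two-line Lipschitz estimate, arriving at the identical constant $C_1 = (\|D^3_{xxy}G\| + \|D^3_{xxv}G\|\|D_yH\|)C_e$.
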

\begin{proof}
\begin{displaymath}
\begin{array}{l}
\|D^2_{xx} G(x_m, y_{\theta} , H(x_m, y_{\theta}, u)) - D^2_{xx} G(x_m, y_{\theta'}, H(x_m, y_{\theta'}, u))\| \\
\leq \|D^3_{xxy}G\||y_{\theta} - y_{\theta'}| + \|D^3_{xxv}G\| \|D_yH\| |y_{\theta} - y_{\theta'}| \\
\leq (\|D^3_{xxy}G\| + \|D^3_{xxv}G\| \|D_yH\|)C_e |\theta - \theta'||p_1 - p_0|.
\end{array}
\end{displaymath}
We set $C_1 = (\|D^3_{xxy}G\| + \|D^3_{xxv}G\| \|D_yH\|)C_e$.
\end{proof}

\begin{Lem}
\label{2dest2}
Let $\xi_p = \mathrm{Proj}_{p_1 - p_0} (\xi)$, where $\mathrm{Proj}_{p}$ is the orthogonal projection onto $p$. Then for some constants $\Delta_1$ and $\Delta_2$ that depend on $\alpha$, the $C^4$ norm of $G$, we have
\begin{displaymath}
D^2_{xx}G(x_m,y_{\theta}, v_{\theta})[\xi, \xi] \leq  \begin{array}{l}
\left((1-\theta) D^2_{xx}G(x_m, y_0, v_0) + \theta D^2_{xx}G(x_m, y_1, v_1)\right)[\xi, \xi] \\
+\theta(1-\theta)|p_1 - p_0|^2(-\Delta_1 |\xi|^2 + \Delta_2 |\xi_p|^2).
\end{array}
\end{displaymath}
\end{Lem}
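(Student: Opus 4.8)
The plan is to study the scalar function $g(\theta) := D^2_{xx}G(x_m, y_\theta, v_\theta)[\xi,\xi]$ along the $G$-segment and show it is ``almost concave'' in $\theta$, with a concavity defect controlled by the component $\xi_p$ of $\xi$ in the $p_1-p_0$ direction. The key is to rewrite $g$ using the $G$-exponential: since $y_\theta = \gexp{x_m}{u}((1-\theta)p_0+\theta p_1)$ and $v_\theta = V_{x_m}((1-\theta)p_0+\theta p_1, u)$, we have $g(\theta) = \mathcal{A}(x_m, (1-\theta)p_0+\theta p_1, u)[\xi,\xi]$ with $\mathcal A$ as in \eqref{G3s}. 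So $g$ is the restriction of $p \mapsto \mathcal A(x_m,p,u)[\xi,\xi]$ to the segment $[p_0,p_1]$, and hence $g''(\theta) = D^2_{pp}\mathcal A(x_m, p_\theta, u)[p_1-p_0, p_1-p_0, \xi, \xi]$ where $p_\theta = (1-\theta)p_0+\theta p_1$. Everything reduces to estimating this fourth-order expression from above.

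First I would decompose $\xi = \xi_p + \xi_\perp$, where $\xi_\perp \perp (p_1-p_0)$. By multilinearity, $D^2_{pp}\mathcal A[p_1-p_0,p_1-p_0,\xi,\xi]$ expands into four terms: the $[\cdot,\cdot,\xi_\perp,\xi_\perp]$ term, two mixed $[\cdot,\cdot,\xi_\perp,\xi_p]$ terms, and the $[\cdot,\cdot,\xi_p,\xi_p]$ term. For the first term, since $\xi_\perp \perp (p_1-p_0)$, the localized \eqref{G3s} estimate \eqref{alpha} from Remark \ref{nicedom} gives $D^2_{pp}\mathcal A(x_m,p_\theta,u)[p_1-p_0,p_1-p_0,\xi_\perp,\xi_\perp] \geq \alpha|p_1-p_0|^2|\xi_\perp|^2$; note we may apply \eqref{alpha} because $(x_m, y_\theta, u) = (x_m, \gexp{x_m}{u}(p_\theta), u) \in S$ by hypothesis. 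The remaining three terms are bounded crudely in absolute value by $C\|G\|_{C^4}|p_1-p_0|^2(|\xi_\perp||\xi_p| + |\xi_p|^2)$, where the $C^4$ norm of $G$ controls $\|D^2_{pp}\mathcal A\|$ on the relevant compact set (here one uses that $\mathcal A$ is built from $G$, $\gexp{}{}$, $V$, whose derivatives are controlled on $S$). Absorbing the cross term $|\xi_\perp||\xi_p|$ via Young's inequality $|\xi_\perp||\xi_p| \leq \tfrac{\epsilon}{2}|\xi_\perp|^2 + \tfrac{1}{2\epsilon}|\xi_p|^2$ with $\epsilon$ small enough that $C\epsilon < \alpha$, we obtain $g''(\theta) \geq (\alpha - C\epsilon)|p_1-p_0|^2|\xi_\perp|^2 - C'|p_1-p_0|^2|\xi_p|^2 \geq -C'|p_1-p_0|^2|\xi_p|^2$ for a constant $C'$ depending on $\alpha$ and $\|G\|_{C^4}$. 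Actually, to get the stated form with the extra $-\Delta_1|\xi|^2$ gain I would keep the positive part: writing $|\xi_\perp|^2 = |\xi|^2 - |\xi_p|^2$ gives $g''(\theta) \geq 2\Delta_1|p_1-p_0|^2|\xi|^2 - 2\Delta_2|p_1-p_0|^2|\xi_p|^2$ for suitable $\Delta_1, \Delta_2 > 0$ (the factor $2$ is for convenience below).

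Finally I would integrate this differential inequality. For any $C^2$ function $g$ on $[0,1]$ with $g''(\theta) \geq -m$ constant, one has $g(\theta) \leq (1-\theta)g(0) + \theta g(1) + \tfrac{m}{2}\theta(1-\theta)$; applying this with $m = 2\Delta_2|p_1-p_0|^2|\xi_p|^2 - 2\Delta_1|p_1-p_0|^2|\xi|^2$ (note this ``$m$'' need not be positive, but the elementary inequality still holds since $g'' \geq -m$ pointwise) yields exactly
\[
g(\theta) \leq (1-\theta)g(0) + \theta g(1) + \theta(1-\theta)|p_1-p_0|^2\big(-\Delta_1|\xi|^2 + \Delta_2|\xi_p|^2\big),
\]
which is the claimed inequality since $g(0) = D^2_{xx}G(x_m,y_0,v_0)[\xi,\xi]$ and $g(1) = D^2_{xx}G(x_m,y_1,v_1)[\xi,\xi]$.

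The main obstacle I anticipate is the bookkeeping in the second step: verifying that $D^2_{pp}\mathcal A$ really does equal the second derivative of $g$ along the segment (this is the chain-rule identification, clean because the segment is affine in $p$), and then carefully controlling the mixed and parallel terms by the $C^4$ norm of $G$ on the compact set $S$ — one must confirm that differentiating $\mathcal A(x,p,u) = D^2_{xx}G(x,\gexp{x}{u}(p),V_x(p,u))$ twice in $p$ produces only quantities bounded in terms of $\|G\|_{C^4}$, $\|H\|$, and $C_e$ (using $D_p\gexp{x}{u} = E^{-1}$ from Remark \ref{derivexp} and the bound \eqref{Ce1} on $E$). The sign-definite part is the only place \eqref{G3s}/\eqref{alpha} enters, and everything else is a quantitative but routine perturbation argument; the Young's-inequality absorption is what forces $\Delta_1$ to be somewhat smaller than $\alpha/2$ but still positive.
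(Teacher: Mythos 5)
Your proof is correct and takes essentially the same route as the paper's: decompose $\xi$ into its component along $p_1-p_0$ and the orthogonal remainder, apply \eqref{G3s}/\eqref{alpha} to the perpendicular part, bound the mixed and parallel terms by $\|D^2_{pp}\mathcal{A}\|$, absorb cross terms via Young's inequality, and integrate the resulting concavity-defect bound. The only cosmetic difference is that the paper splits $f_\xi$ into $f_{\xi'}$ and $g_\xi = f_\xi - f_{\xi'}$ and integrates each piece separately before recombining, whereas you integrate a single lower bound on $f_\xi''$ once; the content is identical.
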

\begin{proof}
Let $f_{\xi} : [0,1] \to \R$ be such that
\begin{displaymath}
f_{\xi}(\theta) = D^2_{xx}G(x_m, y_{\theta}, v_{\theta})[\xi, \xi].
\end{displaymath}
Let $\xi' = \xi - \xi_p $ so that $\xi' \perp \xi_p$. Then we can apply \eqref{G3s}, and we obtain
\begin{displaymath}
f_{\xi'}'' \geq \alpha |p_1 - p_0|^2 |\xi'|^2
\end{displaymath}
and from this uniform convexity, we have
\begin{equation}
\label{lem3.2:1}
f_{\xi'} (\theta) \leq \theta f_{\xi'}(1) + (1-\theta)f_{\xi'}(0) - \frac{1}{2} \alpha |p_1 - p_0|^2|\xi'|^2 \theta(1-\theta).
\end{equation}
Let $g_{\xi} = f_{\xi} - f_{\xi'}$. Then 
\begin{align*}
g_{\xi}''(\theta) & = f_{\xi}''(\theta) - f_{\xi'}'' (\theta)\\
& = D^2_{pp} \mathcal{A}[ \xi, \xi, p_1 - p_0, p_1 - p_0] - D^2_{pp}  \mathcal{A} [ \xi', \xi', p_1 - p_0, p_1 - p_0] \\
& = 2D^2_{pp} \mathcal{A} [\xi', \xi_p, p_1 - p_0, p_1 - p_0] + D^2_{pp} \mathcal{A} [ \xi_p, \xi_p, p_1 - p_0, p_1 - p_0].
\end{align*}
Therefore, bounding $|\xi'|$ by $|\xi|$, we obtain
\begin{displaymath}
|g_{\xi}''| \leq 3 \|D^2_{pp}\mathcal{A} \| |p_1 - p_0|^2|\xi||\xi_p|
\end{displaymath}
and from this bound, we have
\begin{equation}
\label{lem3.2:2}
g_{\xi}(\theta) \leq \theta g_{\xi}(1) + (1-\theta) g_{\xi}(0) + \frac{3}{2}\|D^2_{pp}\mathcal{A}\||p_1 - p_0|^2|\xi||\xi_p| \theta (1-\theta).
\end{equation}
Combining ($\ref{lem3.2:1}$) and ($\ref{lem3.2:2}$), we obtain
\begingroup
\allowdisplaybreaks
\begin{align*}
D^2_{xx}G(x_m,y_{\theta}, v_{\theta}) & =  g_{\xi} + f_{\xi'} \\
&  \leq \theta g(1) + (1-\theta) g(0) + \frac{3}{2} \|D^2_{pp}\mathcal{A}\||p_1 - p_0|^2|\xi||\xi_p| \theta (1-\theta) \\
&  \quad + \theta f_{\xi'}(1) + (1-\theta)f_{\xi'}(0) - \frac{1}{2} \alpha |p_1 - p_0|^2|\xi'|^2 \theta(1-\theta) \\
& = \theta D^2_{xx}G(x_m,y_1, u) [\xi, \xi] + (1-\theta)D^2_{xx}G(x_m,y_0, u)(\xi, \xi) \\
&  \quad + \theta(1-\theta) | p_1 - p_0|^2 \left( -\frac{\alpha}{2}|\xi'|^2 + \frac{3}{2}\|D^2_{pp}\mathcal{A}\| |\xi||\xi_p| \right) \\
& \leq \theta D^2_{xx}G(x_m,y_1, u) [\xi, \xi] + (1-\theta)D^2_{xx}G(x_m,y_0, u)(\xi, \xi) \\
& \quad + \theta(1-\theta) | p_1 - p_0|^2 \left( -\frac{\alpha}{2}|\xi|^2 + (\frac{3}{2}\|D^2_{pp}\mathcal{A}\| +\alpha)|\xi||\xi_p| \right).
\end{align*}
\endgroup
Here, we use weighted Young's inequality
\begin{displaymath}
(\frac{3}{2}\|D^2_{pp}\mathcal{A}\|+\alpha) |\xi||\xi_p| \leq \frac{\alpha}{4}|\xi|^2 + \alpha^{-1} (\frac{3}{2}\|D^2_{pp}\mathcal{A}\|+\alpha)^2 |\xi_p|^2.
\end{displaymath}
Then we obtain
\begin{align*}
D^2_{xx}G(x_m, \yth, \vth) & \leq \theta D^2_{xx}G(x_m,y_1, u) [\xi, \xi] + (1-\theta)D^2_{xx}G(x_m,y_0, u)[\xi, \xi] \\
& + \theta(1-\theta) | p_1 - p_0|^2 \left( -\frac{\alpha}{4} |\xi|^2 + \alpha^{-1} (\frac{3}{2}\|D^2_{pp}\mathcal{A}\| +\alpha)^2 |\xi_p|^2 \right).
\end{align*}
Hence we get the inequality with $\Delta_1 = \frac{\alpha}{4} $ and $\Delta_2  = \alpha^{-1}(\frac{3}{2} \|D^2_{pp}\mathcal{A}\| +\alpha)^2$.
\end{proof}

The next lemma is the quantitative version of (gLp). We will use the (G3s) condition through Lemma $\ref{qglp}$ later.

\begin{Lem}
\label{qglp}
Define $\bar{\phi}(x) : X \to \R$ by
\begin{displaymath}
\bar{\phi}(x) = \max \{ G(x, y_0, v_0), G(x,y_1, v_1) \}
\end{displaymath}
Then we have the quantitative (gLp) : 
\begin{equation}
\bar{\phi}(x) \geq  G(x, \yth, \vth) + \delta_0\theta(1-\theta)|y_1 - y_0|^2|x-x_m|^2 -\gamma |x-x_m|^3
\end{equation}
\label{qglp1}
for $\theta \in [ \epsilon , 1- \epsilon]$ and $|x - x_m| \leq C \epsilon$. 
\end{Lem}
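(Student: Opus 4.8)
The plan is to prove the quantitative (gLp) inequality by a second-order Taylor expansion of the $G$-affine function $\theta \mapsto G(x,\yth,\vth)$ in $\theta$, comparing it against the convex combination of its endpoint values, and then exploiting the two previous lemmas to convert the pointwise estimate on second derivatives into the desired lower bound on $\bar\phi$. Concretely, fix $x$ near $x_m$ and set $h(\theta) = G(x,\yth,\vth)$. Since $\bar\phi(x) = \max\{h(0),h(1)\}$ and $h$ is defined on $[0,1]$, it suffices to bound $h(\theta)$ from above by $(1-\theta)h(0)+\theta h(1)$ minus a positive quadratic-in-$\theta$ term plus a controlled remainder; the max of the endpoints dominates the convex combination $(1-\theta)h(0)+\theta h(1)$, so this gives $\bar\phi(x)\ge h(\theta) + (\text{good term}) - (\text{error})$.

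First I would compute $h''(\theta)$. Writing $w(\theta)$ for the curve in $(y,v)$-space traced by $(\yth,\vth)$, by the very definition of the $G$-segment the quantity $D_xG(x_m,\yth,\vth) = (1-\theta)p_0 + \theta p_1$ is affine in $\theta$, which is the structural identity that makes the computation close. Differentiating $h$ twice and using this identity, the second derivative $h''(\theta)$ should be expressible, up to terms of order $|x-x_m|$, as $\big(D^2_{xx}G(x_m,\yth,\vth) - \text{endpoint average}\big)$-type expression evaluated on $x - x_m$ — more precisely, $h''(\theta)$ equals $D^2_{xx}G$ contracted with $(x-x_m)$ in the leading order, with an $O(|x-x_m|)$ correction controlled by the $C^3$ norm of $G$ and the $C^1$ norm of $H$ (this is where Lemma \ref{2dest} enters, via \eqref{Ce2} to pass between $|y_1-y_0|$, $|p_1-p_0|$ and the parameter $\theta$). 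Then I would apply Lemma \ref{2dest2} with $\xi = x - x_m$ to get the upper bound $D^2_{xx}G(x_m,\yth,\vth)[\xi,\xi] \le (\text{endpoint average}) + \theta(1-\theta)|p_1-p_0|^2(-\Delta_1|\xi|^2 + \Delta_2|\xi_p|^2)$; integrating twice in $\theta$ turns the endpoint-average term into exactly $(1-\theta)h(0)+\theta h(1)$ plus lower-order-in-$\theta$ leftovers, and turns the $-\Delta_1$ term into the claimed $\delta_0\theta(1-\theta)|y_1-y_0|^2|x-x_m|^2$ good term (using \eqref{Ce2} to swap $|p_1-p_0|$ for $|y_1-y_0|$), while the remaining $\Delta_2|\xi_p|^2$ term and the $O(|x-x_m|^3)$ Taylor remainders get absorbed into $-\gamma|x-x_m|^3$.

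The main obstacle is the $\Delta_2|\xi_p|^2$ term, where $\xi_p$ is the projection of $\xi = x-x_m$ onto the direction $p_1 - p_0$: this term has the \emph{wrong sign} and is only quadratic, not cubic, in $|x-x_m|$, so it cannot simply be thrown into $-\gamma|x-x_m|^3$. The resolution must be that $|\xi_p| = |\mathrm{Proj}_{p_1-p_0}(x-x_m)|$ is itself small — of order $|x-x_m|^2$, or at least of order $|x-x_m|\cdot|p_1-p_0|$ with an extra smallness — because at $x = x_m$ the function $\theta\mapsto G(x_m,\yth,\vth)$ is (by the $G$-convexity/twist structure, or by a direct computation showing its $\theta$-derivative involves $D_xG$ differences that vanish at $x_m$) essentially constant up to the relevant order; equivalently, the gradient of $h$ in $x$ at $x=x_m$ is independent of $\theta$, so $\partial_\theta(D_x h)(x_m) = 0$, forcing the bad term to vanish at $x_m$ and hence to be $O(|x-x_m|)$ times what is written, upgrading it to cubic order. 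I would make this precise first, since everything else is routine Taylor estimation on the compact set $S$ where all the constants $C_e,\alpha,\Delta_1,\Delta_2$ and the $C^4$ norm of $G$ are finite; the hypotheses $\theta\in[\epsilon,1-\epsilon]$ and $|x-x_m|\le C\epsilon$ are then used exactly to guarantee $\theta(1-\theta)\ge\epsilon/2$ is large enough that the good quadratic term beats the cubic error for the stated range, and to keep the whole $G$-segment inside $S$.
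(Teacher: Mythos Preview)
Your overall framework is sound and essentially coincides with the paper's: Taylor expand in $x$ around $x_m$, use that $G(x_m,\yth,\vth)=u$ and $D_xG(x_m,\yth,\vth)=(1-\theta)p_0+\theta p_1$, and invoke Lemma~\ref{2dest2} to control the Hessian term. Where your proposal goes wrong is in the handling of the bad term $\theta(1-\theta)|p_1-p_0|^2\,\Delta_2\,|\xi_p|^2$ with $\xi=x-x_m$.

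Your proposed fix is that $|\xi_p|=|\mathrm{Proj}_{p_1-p_0}(x-x_m)|$ should itself be of higher order in $|x-x_m|$, justified by the claim that ``the gradient of $h$ in $x$ at $x=x_m$ is independent of $\theta$, so $\partial_\theta(D_xh)(x_m)=0$.'' This claim is false: by the very definition of the $G$-segment, $D_xG(x_m,\yth,\vth)=(1-\theta)p_0+\theta p_1$, so $\partial_\theta D_xh\big|_{x=x_m}=p_1-p_0\neq 0$. More directly, $\xi_p$ is just the component of the fixed vector $x-x_m$ along the fixed direction $p_1-p_0$; it has nothing to do with $\theta$ and is generically of size $|x-x_m|$, not $|x-x_m|^2$. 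So the bad term really is of order $|x-x_m|^2$ with the wrong sign, and cannot be absorbed into $-\gamma|x-x_m|^3$ by your argument.

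The paper kills this term by a different mechanism. Since $\bar\phi(x)\ge(1-\theta')G(x,y_0,v_0)+\theta'\,G(x,y_1,v_1)$ for \emph{every} $\theta'\in[0,1]$, one can write the analogue of your estimate at a nearby parameter $\theta'=\theta+\theta(1-\theta)\Delta_2\langle p_1-p_0,\,x-x_m\rangle$. The extra linear piece $(\theta'-\theta)\langle p_1-p_0,\,x-x_m\rangle$ exactly cancels the $\Delta_2|\xi_p|^2$ term (using $|p_1-p_0|\,|\xi_p|=|\langle p_1-p_0,\,x-x_m\rangle|$). The hypotheses $\theta\in[\epsilon,1-\epsilon]$ and $|x-x_m|\le C\epsilon$ are there precisely to guarantee this shifted $\theta'$ stays in $[0,1]$; the residual errors from the shift (a Hessian difference controlled by Lemma~\ref{2dest}, and two $|\theta'(1-\theta')-\theta(1-\theta)|$ terms) are all cubic in $|x-x_m|$. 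This $\theta\to\theta'$ shift is the missing idea in your plan.
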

\begin{proof}
Note that by taking the Taylor series, 
\begin{displaymath}
G(x,y_i, v_i) = u + \langle D_xG(x_m,y_i,v_i), (x-x_m) \rangle + \frac{1}{2} D^2_{xx}G(x,y_i,v_i)[x-x_m, x-x_m] + o(|x-x_m|^2).
\end{displaymath}
Therefore, we have
\begin{align*}
\bar{\phi}(x) & \geq \theta G(x,y_0,v_0) + (1 - \theta) G(x,y_1,v_1) \\
& = u + \langle \theta p_1 + (1-\theta) p_0 , x-x_m \rangle \\
&\ + \frac{1}{2} \left( \theta D^2_{xx}G(x, y_0, v_0) + (1-\theta) D^2_{xx}(x, y_1, u_1)\right)[x-x_m , x-x_m] + o(|x-x_m|^2).
\end{align*}
Applying Lemma $\ref{2dest2}$, we obtain
\begin{align}
\label{qglp2}
\bar{\phi}(x) & \geq u + \langle \theta p_1 + (1-\theta)p_0, x-x_m \rangle + \frac{1}{2} D^2_{xx}G(x_m, y_{\theta}, \vth) [x-x_m, x-x_m] \\
& \ \ - \frac{1}{2}\theta(1-\theta)|p_1 - p_0|^2 ( - \Delta_1 |x-x_m|^2 + \Delta_2 |(x-x_m)_p|^2) + o(|x-x_m|^2) \nonumber
\end{align}
for any $\theta \in [0,1]$. Let $\theta' \in [0,1]$, then we can write ($\ref{qglp2}$) with $\theta'$. Let us call this inequality ($\ref{qglp2}$'). Then adding and subtracting the right hand side of ($\ref{qglp2}$) to the right hand side of ($\ref{qglp2}$') and reordering some terms, we get
\begin{align}
\label{qglp3}
\bar{\phi}(x) & \geq u + \langle \theta p_1 + (1-\theta)p_0,x-x_m \rangle + \frac{1}{2} D^2_{xx}(x_m, y_{\theta}, \vth) [x-x_m, x-x_m] \nonumber \\
& \quad + \frac{1}{2} \Delta_1 \theta(1-\theta)|p_1 - p_0|^2  |x-x_m|^2 \nonumber \\
& \quad +(\theta' - \theta)\langle p_1 - p_0, x - x_m \rangle -\frac{1}{2}\theta(1-\theta)\Delta_2|p_1 - p_0|^2|(x - x_m)_p|^2 \nonumber \\
& \quad + \frac{1}{2} \left( D^2_{xx}G(x_m, y_{\theta'},v_{\theta'}) - D^2_{xx}G(x_m, y_{\theta},\vth) \right) [x-x_m, x-x_m] \\
& \quad + \frac{1}{2}\Delta_1 \left( (\theta'(1-\theta') - \theta(1-\theta) \right)|p_1 - p_0|^2|x-x_m|^2 \nonumber \\
& \quad + \frac{1}{2}\Delta_2 \left( (\theta(1-\theta) - \theta'(1-\theta') \right)|p_1 - p_0|^2|(x-x_m)_p|^2 + o(|x-x_m|^2) \nonumber
\end{align}
Note that by definition of $(x-x_m)_p$, we have $|p_1 - p_0||(x - x_m)_p| = | \langle p_1 - p_0,x - x_m \rangle|$. Hence, we can write the third line $L_3$ as follows
\begin{displaymath}
L_3 = [\theta' - \theta -\frac{1}{2}\theta(1-\theta)\Delta_2 \langle p_1 - p_0, x - x_m \rangle]\langle p_1 - p_0 ,x - x_m \rangle.
\end{displaymath}
Therefore, if we choose 
\begin{equation}
\label{theta'}
\theta' = \theta + \theta(1-\theta) \Delta_2 \langle p_1 - p_0, x - x_m \rangle ,
\end{equation}
we can make $L_3 = 0$. To ensure $\theta'$ is in $[0,1]$, we first assume that $\theta$ is away from 0 and 1, i.e. we assume $\theta \in [\epsilon, 1- \epsilon]$ for $\epsilon >0$. Then we can make the second term $\theta(1-\theta) \Delta_2 \langle p_1 - p_0, x - x_m \rangle$ small. by assuming
\begin{displaymath}
|x - x_m| \leq \frac{4 \epsilon}{ \Delta_2 |p_1 - p_0|} \leq \frac{\epsilon}{\theta(1-\theta) \Delta_2 |p_1 - p_0|}.
\end{displaymath}
Under these assumptions and ($\ref{theta'}$), we get $\theta' \in [0,1]$ and $L_3 = 0$. We can apply Lemma $\ref{2dest}$ to the forth line $L_4$ of ($\ref{qglp3}$) to get
\begin{align}
\label{qglp4}
L_4 & = \frac{1}{2} \left( D^2_{xx}G(x_m, y_{\theta'},v_{\theta'}) - D^2_{xx}G(x_m, y_{\theta},\vth) \right) [x-x_m, x-x_m] \nonumber \\ 
& \geq - C_1 |\theta - \theta'||p_1 - p_0||x - x_m|^2 \\
& \geq -C_1 \theta(1-\theta)\Delta_2 |p_1 - p_0|^2|x-x_m|^3 \nonumber \\
& \geq - \frac{1}{4} C_1 \Delta_2 |p_1 - p_0|^2|x-x_m|^3 \nonumber
\end{align}
For the fifth and sixth line, $L_5$ and $L_6$, note that by ($\ref{theta'}$),
\begin{align*}
\theta'(1-\theta') - \theta(1-\theta) & = (\theta - \theta')(\theta + \theta' -1) \\
& = - \theta(1-\theta)\Delta_2 \langle p_1 - p_0, x-x_m \rangle (\theta + \theta' -1)
\end{align*}
so that we can bound
\begin{align}
\label{qglp5}
|L_5|  & = \left| \Delta_1 [(\theta'(1-\theta') - \theta(1-\theta)]|p_1 - p_0|^2|x-x_m|^2 \right| \nonumber \\
& \leq \theta(1-\theta)(\theta+\theta'-1)\Delta_1 \Delta_2|p_1 - p_0|^3|x-x_m|^3 \\
& \leq \frac{1}{4} \Delta_1 \Delta_2|p_1 - p_0|^3|x-x_m|^3 \nonumber
\end{align}
\begin{align}
\label{qglp6}
|L_6| & =\left| \Delta_2 [(\theta(1-\theta) - \theta'(1-\theta')]|p_1 - p_0|^2|(x-x_m)_p|^2 \right| \nonumber \\
& \leq \theta(1-\theta)(\theta+\theta'-1)(\Delta_2)^2 |p_1 - p_0|^3|x-x_m|^3 \\
& \leq \frac{1}{4} (\Delta_2)^2  |p_1 - p_0|^3|x-x_m|^3. \nonumber
\end{align}
Combining ($\ref{qglp4}$), ($\ref{qglp5}$), and ($\ref{qglp6}$), we can bound ($\ref{qglp3}$) from below 
\begin{align}
\label{qglp7}
\bar{\phi}(x) \geq & u + \langle \theta p_1 + (1-\theta)p_0, x-x_0 \rangle + \frac{1}{2} D^2_{xx}G(x_m, y_{\theta}, \vth) [x-x_m, x-x_m]  \nonumber\\
& + \Delta_1 \theta(1-\theta)|p_1 - p_0|^2  |x-x_m|^2 \\
& - C_2(|p_1 - p_0|^2 + |p_1 - p_0|^3)|x-x_m|^3 + o(|x-x_m|^2) \nonumber
\end{align}
where $C_2$ depends on $C_1$, $\Delta_1$, and $\Delta_2$. We apply Taylor's Theorem to the first line of ($\ref{qglp7}$) and change it to $G(x,\yth,\vth)$ with $o( |x-x_m|^2)$. Moreover, the little o term $o(|x-x_m|^2)$ is at least $O(|x-x_m|^3)$ because the generating function is $C^4$. Therefore we can put it with the $|x-x_m|^3$ term, and we get
\begin{align*}
\bar{\phi}(x) \geq &  G(x,\yth,\vth) + \Delta_1 \theta(1-\theta)|p_1 - p_0|^2  |x-x_m|^2 \\
& -C_2(1 + |p_1 - p_0|^2 + |p_1 - p_0|^3)|x-x_m|^3
\end{align*}
possibly taking larger $C_2$ then before. Finally, we bound $|p_1 - p_0|$ by $C_e \mathrm{diam}(Y)$ and $\frac{1}{C_e}|y_1 - y_0|$ from above and below to get
\begin{displaymath}
\bar{\phi}(x) \geq  G(x,\yth,\vth) + \frac{\Delta_1}{C_e^2}\theta(1-\theta)|y_1 - y_0|^2|x-x_m|^2 -\gamma |x-x_m|^3.
\end{displaymath}
Hence we obtain the lemma with $\delta_0 = \Delta_1 / C_e^2$ and $\gamma = C_2(1+C_e^2 \mathrm{diam}(Y)^2 + C_e^3 \mathrm{diam}(Y)^3) $.
\end{proof}

\subsection{Local estimates for $G-$convex functions}

If a $G$-convex function $\phi$ is $C^2$, then for any $x \in X$ we get a $G$-supporting function at $x$. From the definition of a $G$-supporting function, the difference of $\phi$ and the $G$-supporting function attains global minimum at $x$ and the regularity condition implies
\begin{displaymath}
D^2_{xx} \phi \geq \ D^2_{xx}G \geq - \| D^2_{xx} G \| I.
\end{displaymath}
Hence it is semi convex. We show the semi convexity without $C^2$ assumption on the $G$- convex function in the following proposition.

\begin{Prop}
\label{semiconvex}
Let $\phi$ be a $G$-convex function that satisfies (nicew). Then we have following inequality
\begin{equation}
\phi(x_t) \leq (1-t)\phi(x_0) + t\phi(x_1) +\frac{1}{2}t(1-t)\| D^2_{xx} G \||x_0 - x_1|^2
\end{equation}
where $x_t = (1-t) x_0 + t x_1$. In particular, $\phi$ is semi-convex.
\end{Prop}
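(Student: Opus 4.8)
The plan is to support $\phi$ at the intermediate point $x_t$ and compare the resulting $G$-supporting function to $\phi$ at the two endpoints. By the definition of a $G$-convex function there is a $G$-focus $(y_t,v_t)$ of $\phi$ at $x_t$, so $(x_t,y_t,v_t)\in\g$, $\phi(x_t)=G(x_t,y_t,v_t)$, and $\phi(x)\ge G(x,y_t,v_t)$ for every $x\in X$; in particular $\phi(x_0)\ge G(x_0,y_t,v_t)$ and $\phi(x_1)\ge G(x_1,y_t,v_t)$. Since $v_t=H(x_t,y_t,\phi(x_t))$ and (nicew) gives $\phi(x_t)\in(a(x_t,y_t),b(x_t,y_t))$, the triple $(x_t,y_t,v_t)$ lies in $\Psi$, hence in the compact set $X\times Y\times[\min a,\max b]$; by \eqref{Regular} the quantity $\|D^2_{xx}G\|$ (the supremum over this compact set) is finite, which is the constant appearing in the statement. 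As usual $x_0,x_1$ are taken so that the Euclidean segment between them lies in $X$ — only the local behaviour is needed in the sequel, and the proposition is applied on convex subdomains.

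Combining the equality $\phi(x_t)=G(x_t,y_t,v_t)$ with the two inequalities $\phi(x_i)\ge G(x_i,y_t,v_t)$, it suffices to prove the one--variable inequality
\[
G(x_t,y_t,v_t)-(1-t)G(x_0,y_t,v_t)-tG(x_1,y_t,v_t)\le \tfrac12 t(1-t)\|D^2_{xx}G\|\,|x_0-x_1|^2 .
\]
Writing $x_s:=(1-s)x_0+sx_1$ and $g(s):=G(x_s,y_t,v_t)$, this says $g(t)-(1-t)g(0)-tg(1)\le \tfrac12 t(1-t)M$ with $M:=\|D^2_{xx}G\|\,|x_0-x_1|^2$. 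Because $G\in C^4$ by \eqref{Regular}, $g\in C^2([0,1])$ with $g''(s)=D^2_{xx}G(x_s,y_t,v_t)[x_1-x_0,x_1-x_0]$, so $g''\ge -M$ on $[0,1]$, and $g(0),g(1)$ are otherwise unconstrained.

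The last step is a standard convexity comparison: set $w(s):=g(s)-(1-s)g(0)-sg(1)-\tfrac{M}{2}s(1-s)$. Then $w(0)=w(1)=0$ and $w''=g''+M\ge 0$, so $w$ is convex on $[0,1]$ and therefore $w(t)\le(1-t)w(0)+tw(1)=0$, which is exactly the desired inequality for $g$. Unwinding, this yields the displayed estimate for $\phi$; and via the identity $(1-t)|x_0|^2+t|x_1|^2-|x_t|^2=t(1-t)|x_0-x_1|^2$ the estimate is equivalent to convexity of $x\mapsto\phi(x)+\tfrac12\|D^2_{xx}G\|\,|x|^2$, i.e.\ $\phi$ is semi-convex.

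The argument is largely routine once the supporting function at $x_t$ is chosen; the only point that needs care is the \textbf{bookkeeping of the domain and of the scalar parameter}: one has to know that a $G$-support exists at $x_t$ (supplied by $G$-convexity of $\phi$) and that $v_t$, and hence the Hessian bound $\|D^2_{xx}G\|$ serving as the semi-convexity constant, stays in a fixed compact set — this is precisely where (nicew) together with compactness of $X\times Y\times[\min a,\max b]$ enter, in contrast to the optimal transport case where such a bound is automatic on the whole space.
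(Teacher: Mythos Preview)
Your proof is correct and follows essentially the same route as the paper: both support $\phi$ at the midpoint $x_t$ by a $G$-affine function, use the bound on $D^2_{xx}G$ to control the deviation at the endpoints, and then deduce semi-convexity via the identity $(1-t)|x_0|^2+t|x_1|^2-|x_t|^2=t(1-t)|x_0-x_1|^2$. The only cosmetic difference is that the paper writes a second-order Taylor expansion at $x_t$ and takes the weighted average at $x_0,x_1$, whereas you recast the same computation as a one-variable convexity argument for $w(s)$; these are interchangeable.
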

\begin{proof}
Since $\phi$ is $G-$convex, we have $y \in Y$ and $v \in \R$ such that $(x_t, y, v) \in \Psi$ and  
\begin{align*}
& \phi(x_t) = G(x_t, y,v), \\
&\phi(x) \geq G(x, y, v), \ \forall x \in X.
\end{align*}
Moreover, we have
\begin{displaymath}
G(x,y,v) \geq \phi(x_t) + \langle p_t , x-x_t \rangle - \frac{1}{2} \| D^2_{xx} G \| (x-x_t)^2
\end{displaymath}
where $p_t = D_x G(x_t, y, v)$. Evaluate this at $x = x_0$ and $x = x_1$ and add them with weight $(1-t)$ and $t$ respectively.
\begin{align}
\label{semiconvex1}
(1-t) \phi(x_0) + t \phi(x_1) &\geq (1-t) G(x_0 , y, v) + t G(x_1, y, v) \nonumber \\
& \geq \phi(x_t) + \langle p , (1-t)(x_0 - x_t) + t (x_1 - x_t) \rangle \\
& \quad - \frac{1}{2} \| D^2_{xx} G \|  \left( (1-t)(x_0 - x_t)^2 + t (x_1 - x_t)^2 \right). \nonumber
\end{align}
Note that by the choice of $x_t$, we have $(1-t)(x_0 - x_t) + t(x_1 - x_t) = 0$ and
\begin{displaymath}
|x_0 - x_t| = t|x_0 - x_1| \textrm{ and } |x_1 - x_t| = (1-t)|x_0 - x_1|.
\end{displaymath}
Then ($\ref{semiconvex1}$) becomes
\begin{displaymath}
(1-t)\phi(x_0) + t \phi(x_1) \geq \phi(x_t) - \frac{1}{2}t(1-t) \| D^2_{xx} G \| |x_0 - x_1|^2,
\end{displaymath}
which is the desired inequality. This implies that $\phi$ is semi-convex because $t(1-t)|x_0 - x_1|^2 + |x_t|^2 = (1-t)|x_0|^2 + t|x_1|^2$ so that
\begin{align*}
\phi(x_t) + \frac{1}{2} \| D^2_{xx}G \| |x_t|^2 \leq&  (1-t) \phi(x_0) + t \phi(x_1) \\ & + \frac{1}{2} \| D^2_{xx} G \| ( t(1-t) |x_1 - x_0|^2 + |x_t|^2 ) \\
\leq &  (1-t) ( \phi(x_0) + \frac{1}{2}  \| D^2_{xx} G \||x_0|^2 ) \\ & + t ( \phi(x_1) + \frac{1}{2} \| D^2_{xx}G \| |x_1|^2 ) .
\end{align*}
Therefore $\phi(x) + \frac{1}{2} \|D^2_{xx}G \||x|^2$ is convex.
\end{proof}

When we use a norm of some derivatives of $G$ and $H$, we need to check that the points lie in $\h$ and $\g$, and they are in a compact subset of $X \times Y \times \R$. So far, we choose points on the graph of a $G$-convex functions. Then this choice with the $G$-convexity of $G$-subdifferential ensures that the points which we are using are in compact subsets of $\h$ and $\g$. But we need to use other points later (for example, $(x_t, y_i, u)$ in the lemma $\ref{localest}$). Hence we localize the argument and use the next lemma to show our points lie in a compact subset of $\h$.

\begin{Lem}
\label{localniceg}
Let $\phi$ be a $G$-convex function with (nicew) and let $x_0 \in \mathring{X}$. Then there exists $\delta(x_0) >0$ and $S \Subset \h$ such that if $|x_1 - x_0| < \delta(x_0)$, then
\begin{equation}
\label{inS}
\begin{array}{c}
(x_t, \yth , G(x_t,y_0,H(x_0,y_0,\phi(x_0))) \in S \\
(x_t, \yth, \phi(x_t)) \in S
\end{array}
\end{equation}
for any $x_t = (1-t)x_0 + tx_1$, $t \in [0,1]$ and $y_{\theta}$, the $G$-segment connecting $y_0$ and $y_1$ with focus $(x_t, \phi(x_t))$ where $y_0 \in \gsub{\phi}{x_0}$, $y_1 \in \gsub{\phi}{x_1}$.
\end{Lem}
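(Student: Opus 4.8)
The plan is to produce, once and for all, a single compact set $S \Subset \h$ attached to $x_0$, and then to choose $\delta(x_0)$ so small that every point listed in \eqref{inS} lands in $S$. Near $x_0$ I have three things available: $K_0 := \gsub{\phi}{x_0} \Subset \h_{x_0,\phi(x_0)}$ by Proposition \ref{compgsub}; $\phi$ is semi-convex, hence continuous, on a ball $B_{r_0}(x_0)\subset X$ (Proposition \ref{semiconvex} together with $x_0\in\mathring X$); and $\h$ is relatively open by \eqref{Domopen}. Recall also that $G$ and $H$ are globally defined and $C^4$ on $X\times Y\times\R$ by \eqref{Regular}.

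First I would build a compact ``$G$-convex hull'' of $K_0$ inside $\h_{x_0,\phi(x_0)}$. Put $P_0 := \{\,D_xG(x_0,y,H(x_0,y,\phi(x_0))) : y\in K_0\,\}$, a compact subset of $\h^*_{x_0,\phi(x_0)}$; let $C := \mathrm{conv}(P_0)$, which is compact and, since $\h^*_{x_0,\phi(x_0)}$ is convex by \eqref{vDomConv}, still lies in $\h^*_{x_0,\phi(x_0)}$; and set $\tilde{\mathcal S}_0 := \gexp{x_0}{\phi(x_0)}(C)$. By Remark \ref{derivexp}, $\gexp{x_0}{\phi(x_0)}$ is defined and continuous on $\h^*_{x_0,\phi(x_0)}$ with image $\h_{x_0,\phi(x_0)}$, so $\tilde{\mathcal S}_0$ is a compact subset of $\h_{x_0,\phi(x_0)}$ which contains $K_0$ (as $K_0=\gexp{x_0}{\phi(x_0)}(P_0)$ and $P_0\subset C$) and, by the definition of a $G$-segment, contains every $G$-segment joining two points of $K_0$ with focus $(x_0,\phi(x_0))$. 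Then $\{x_0\}\times\tilde{\mathcal S}_0\times\{\phi(x_0)\}$ is a compact subset of the relatively open set $\h$, so there is $\rho>0$ with
\[
S := \left\{ (x,y,u)\in X\times Y\times\R : |x-x_0|\le\rho,\ \mathrm{dist}(y,\tilde{\mathcal S}_0)\le\rho,\ |u-\phi(x_0)|\le\rho \right\} \Subset \h .
\]

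It remains to choose $\delta(x_0)\in(0,r_0]$ with $B_{\delta(x_0)}(x_0)\subset X$ so that, whenever $|x_1-x_0|<\delta(x_0)$, for every $t\in[0,1]$, $y_0\in K_0$ and $y_1\in\gsub{\phi}{x_1}$ the following hold: (a) $|\phi(x_t)-\phi(x_0)|<\rho$ and $|G(x_t,y_0,H(x_0,y_0,\phi(x_0)))-\phi(x_0)|<\rho$ — this uses continuity of $\phi$ on $B_{r_0}(x_0)$ and uniform continuity of $G$ on a compact set, together with compactness of $K_0$ (note $G(x_0,y_0,H(x_0,y_0,\phi(x_0)))=\phi(x_0)$); (b) $\mathrm{dist}(y_1,K_0)<\rho$, by Proposition \ref{gsubconti}. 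Since $K_0\subset\tilde{\mathcal S}_0$, (a) and (b) already place $(x_t,y_0,\phi(x_t))$ and $(x_t,y_1,\phi(x_t))$ in the open $\rho$-enlargement of $\{x_0\}\times\tilde{\mathcal S}_0\times\{\phi(x_0)\}$, hence in $\h$, so \eqref{vDomConv} makes the $G$-segment $\yth$ connecting $y_0$ and $y_1$ with focus $(x_t,\phi(x_t))$ well defined. The remaining — and delicate — requirement is (c): $\mathrm{dist}(\yth,\tilde{\mathcal S}_0)\le\rho$ for every $\theta\in[0,1]$. Granting (a)–(c), both triples in \eqref{inS} lie in $S$, which is the assertion.

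The main obstacle is (c), because $\yth$ is an entire segment whose two endpoints \emph{and} whose focus $(x_t,\phi(x_t))$ have all moved off the base data at $x_0$. I would handle it by uniform continuity. The map
\[
(x,u,y_0,y_1,\theta)\ \longmapsto\ \gexp{x}{u}\!\left((1-\theta)\,D_xG(x,y_0,H(x,y_0,u)) + \theta\, D_xG(x,y_1,H(x,y_1,u))\right)
\]
is continuous on the relatively open set where it is defined (implicit function theorem, via \eqref{Regular} and \eqref{Gnondeg}); the compact set $\{x_0\}\times\{\phi(x_0)\}\times K_0\times K_0\times[0,1]$ lies in that domain, and the map sends it into $\tilde{\mathcal S}_0$ by the very definition of $\tilde{\mathcal S}_0$. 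Uniform continuity on a fixed compact neighbourhood of that set inside the domain then gives $\eta>0$ such that inputs within $\eta$ of the set are carried to within $\rho$ of $\tilde{\mathcal S}_0$; choosing $\delta(x_0)$ small enough that $|x_t-x_0|$, $|\phi(x_t)-\phi(x_0)|$ and $\mathrm{dist}(y_1,K_0)$ are each $<\eta/3$ (possible by continuity of $\phi$ and Proposition \ref{gsubconti}) yields (c). Equivalently, one can argue by contradiction along a sequence $x_1^{(k)}\to x_0$, extracting convergent subsequences of $y_0^{(k)}$, $y_1^{(k)}$, $t_k$, $\theta_k$ from compactness of $Y$, $K_0$ and $[0,1]$ and passing to the limit in the defining relations of the $G$-segment, with $y_1^{(k)}\to K_0$ forced by Proposition \ref{gsubconti}.
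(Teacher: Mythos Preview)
Your argument is correct and follows the same scheme as the paper: build a compact product set $S\Subset\h$ around $\{x_0\}\times(\text{$G$-convex hull of }\gsub{\phi}{x_0})\times\{\phi(x_0)\}$, then use continuity of $\phi$, continuity of $G$, and Proposition~\ref{gsubconti} to force all the relevant triples into $S$ once $x_1$ is close enough. The only genuine difference is in how the ``segment-trapping'' step (your~(c)) is executed: the paper works explicitly in $p$-coordinates, using the bi-Lipschitz bound $C_e$ from Remark~\ref{nicedom} to translate between $\nbhd{r_2}{\gsub{\phi}{x_0}}$ and $\nbhd{r_2/C_e}{\gsubs{\phi}{x_0}}$, then invokes the convexity of $\nbhd{r_2/2C_e}{\gsubs{\phi}{x_0}}$ to keep the interpolated $p_\theta$ inside and maps back; you instead package this as uniform continuity of the single map $(x,u,y_0,y_1,\theta)\mapsto\yth$ on a compact neighbourhood of the base data. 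Your version is cleaner and avoids tracking the explicit constants, at the cost of not producing the concrete radii $r_2/4C_e$, $r_2/2C_e$ that the paper later refers back to (see the end of the proof of Lemma~\ref{gsubdiffest}, where \eqref{smallr} uses the specific $r_2$ from this construction). Both routes rely on the same ingredients: openness of $\h$, compactness of $\gsub{\phi}{x_0}$, \eqref{vDomConv}, and Proposition~\ref{gsubconti}.
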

\begin{proof}
Note that by (nicew), we have that $(x_0, y_0, \phi(x_0)) \in \mathring{\h}$ for any $y_0 \in \gsub{\phi}{x_0}$. Therefore, we have $r_1, r_2, r_3 >0$ such that
\begin{equation}
\label{S}
S :=  B_{r_1}(x_0)  \times \left( \nbhd{r_2}{\gsub{\phi}{x_0}} \cap Y \right) \times (\phi(x_0) - r_3 , \phi(x_0) + r_3) \Subset \mathring{\h}
\end{equation}
that is, $\overline{S}$ is compact and $\overline{S}$ is contained in $\mathring{\h}$. Let $C_e$ be the constant from Remark $\ref{nicedom}$, and let $ \gsubs{\phi}{x_0} = \left(\gexp{x_0}{\phi(x_0)} \right)^{-1} \left( \gsub{\phi}{x_0} \right) $. Then by the same remark, we have
\begin{displaymath}
\nbhd{r_2 / C_e}{\gsubs{\phi}{x_0}} \cap \h^*_{x_0, \phi(x_0)} \subset \left( \gexp{x_0}{\phi(x_0)} \right)^{-1} \left( \nbhd{r_2}{\gsub{\phi}{x_0}} \cap Y \right).
\end{displaymath}
Note that $D_x G(x, \cdot, H(x, \cdot, u)) = \left( \gexp{x}{u} \right)^{-1}(\cdot)$. Since the function $(x,y,u) \mapsto D_xG(x,y,H(x,y,u))$ is uniformly continuous on $S$, there exist $\delta_x, \delta_u >0$ such that if $|x - x_0| < \delta_x$ and $|u - \phi(x_0)| < \delta_u$, then
\begin{displaymath}
\left| D_xG(x, y, H(x,y,u)) - D_xG(x_0, y, H(x_0, y, \phi(x_0))) \right| < \frac{r_2}{4C_e}
\end{displaymath}
for any $y \in \nbhd{r_2}{\gsub{\phi}{x_0}} \cap Y$. Hence, for any $y \in \nbhd{r_2}{\gsub{\phi}{x_0}} \cap Y$ such that $ \left( \gexp{x_0}{\phi(x_0)} \right)^{-1} (y) \in \nbhd{r_2/4C_e}{\gsubs{\phi}{x_0}}$, we have
\begin{displaymath}
\left( \gexp{x}{u} \right)^{-1}(y) \in \nbhd{r_2/2C_e}{\gsubs{\phi}{x_0}}
\end{displaymath}
if $|x-x_0| < \delta_x$ and $|u - \phi(x_0)| < \delta_u$. Note that the set $\nbhd{r_2/2C_e}{\gsubs{\phi}{x_0}}$ is convex by $G-$convexity of $G-$subdifferentials. Again from Remark $\ref{nicedom}$, if $y \in \nbhd{r_2/4C_e^2}{\gsub{\phi}{x_0}}$ then $\left( \gexp{x_0}{\phi(x_0)} \right)^{-1}(y) \in \nbhd{r_2/4C_e}{\gsubs{\phi}{x_0}}$. By Proposition $\ref{gsubconti}$, there exists $\delta_1$ such that if $|x-x_0| < \delta_1$, then
\begin{displaymath}
\gsub{\phi}{x} \subset \nbhd{r_2/4C_e^2}{\gsub{\phi}{x_0}}.
\end{displaymath}
Moreover, by continuity of $G$,$H$, and $\phi$, we have $\delta_2$ such that if $|x-x_0| < \delta_2$, then for any $y_0 \in \gsub{\phi}{x_0}$,
\begin{align*}
\allowdisplaybreaks
|G(x,y_0,H(x_0,y_0,\phi(x_0))) - \phi(x_0)| < \min\{\delta_u, r_3\} \\
|\phi(x) - \phi(x_0)| < \min\{ \delta_u, r_3 \}.
\end{align*}
We take $\delta(x_0)$ small enough so that $\delta(x_0) \leq \min \{ \delta_x, \delta_u, \delta_1, \delta_2, r_1, r_3 \}$. Suppose $|x_1 - x_0| < \delta(x_0)$. Then $y_1 \in \gsub{\phi}{x_1} \subset \nbhd{r_2/4C_e^2}{\gsub{\phi}{x_0}}$. Moreover, for any $y_0 \in \gsub{\phi}{x_0}$, we have $\phi(x_t), G(x_t, y_0, H(x_0,y_0,\phi(x_0))) \in ( \phi(x_0) - r_3 , \phi(x_0) + r_3)$. Hence $(x_t, y_0, u)$, $(x_t, y_1, u) \in S \subset \h$ where $u$ is either $\phi(x_t)$ or $G(x_t, y_0, H(x_0,y_0,\phi(x_0)))$. By our construction, the $G$-segment that connects $y_0$ and $y_1$ with focus $(x_t, u)$ is contained in $\nbhd{r_2/2}{\gsub{\phi}{x_0}} \cap Y$. Therefore, we have
\begin{displaymath}
(x_t, \yth, u) \in B_{\delta(x_0)}(x_0) \times \left( \nbhd{r_2/2}{\gsub{\phi}{x_0}} \cap Y \right) \times (\phi(x_0) - r_3 , \phi(x_0) + r_3) \subset S.
\end{displaymath}
\end{proof}

\begin{Rmk}
\label{deponsol}
The constant $\delta(x_0)$ depends on the modulus of continuity of $\phi$ and $\partial_G \phi$ at $x_0$. If we have estimates on these apriori, then we can get rid of this dependency on the solution.
\end{Rmk}
Note that ($\ref{S}$) shows structure of $S$ other than ($\ref{inS}$). We will use this structure of $S$ too. With help of the above lemma, we can bound various norms necessary to obtain the next lemma.

\begin{Lem}
\label{localest}
Let $\phi$ be a $G$-convex function and let $x_0 \in X$. Choose $x_1$ such that $|x_0 - x_1| < \delta(x_0)$. Let $G(x,y_0,v_0)$ and $G(x,y_1,v_1)$ be supporting $G$-affine functions that supports $\phi$ at $x_0$ and $x_1$ respectively. Let $x_t \in [x_0 , x_1]$ such that
\begin{displaymath}
G(x_t, y_0, v_0) = G(x_t,y_1,v_1) =: u.
\end{displaymath}
We assume $|y_1 - y_0| \geq |x_0 - x_1|$, then we have
\begin{equation}
\label{lem4}
\phi(x_t) - u \leq C_3 |x_1 - x_0| |y_1 - y_0|
\end{equation}
where $C_3$ depends on the $C^2$ norm of $G$, hence on constant $C_e$ from Remark $\ref{nicedom}$, which in turn depends on $S$ in Lemma $\ref{localniceg}$
\end{Lem}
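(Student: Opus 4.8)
The plan is to bound $\phi(x_t) - u$ by comparing the value at $x_t$ with the two supporting functions evaluated slightly away from their base points, exploiting that $\phi$ lies above both $G$-affine functions $G(\cdot, y_0, v_0)$ and $G(\cdot, y_1, v_1)$ everywhere. The key geometric picture is this: at $x_0$ the function $\phi$ touches $G(\cdot, y_0, v_0)$, and at $x_1$ it touches $G(\cdot, y_1, v_1)$; at the intermediate point $x_t$ the two $G$-affine functions have a common value $u$, while $\phi(x_t)$ sits above both. The excess $\phi(x_t) - u$ should be controlled by how much $\phi$ can deviate from a $G$-affine function over a segment of length $|x_0 - x_1|$, which by semiconvexity (Proposition~\ref{semiconvex}) and the supporting property is of order $|x_0 - x_1|$ times the variation in slopes, which in turn is comparable to $|y_1 - y_0|$ via the bound $|p_1 - p_0| \le C_e \mathrm{diam}(Y)$ and $\tfrac{1}{C_e}|y_1 - y_0| \le |p_1 - p_0|$ from Remark~\ref{nicedom} (and Lemma~\ref{localniceg} guarantees all the relevant points lie in the compact $S$, so these constants are available).

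First I would invoke Lemma~\ref{localniceg} to fix a compact $S \Subset \h$ containing $(x_t, y_0, u)$, $(x_t, y_1, u)$, and all points $(x_s, y_i, v_i)$ for $s \in [0,1]$, so that the $C^2$-norm of $G$ and the constant $C_e$ are finite on the relevant set. Next, I would use the supporting inequality at $x_0$: $\phi(x_t) \ge G(x_t, y_0, v_0) = u$, which is the trivial lower bound, but more usefully I would write, using Taylor expansion of the $G$-affine function $G(\cdot, y_0, v_0)$ around $x_0$ and the fact that $\phi(x_0) = G(x_0, y_0, v_0)$ while $\phi(x_t) \le (1-t)\phi(x_0) + t\phi(x_1) + \tfrac12 t(1-t)\|D^2_{xx}G\| |x_0 - x_1|^2$ from semiconvexity. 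The core estimate is to compare $\phi(x_t)$ to $u$ by expanding both $G$-affine functions to second order around $x_t$: since they agree at $x_t$ with value $u$, their gradients at $x_t$ differ by an amount controlled by $|p_1 - p_0|$ plus the second-order spread over the segment, hence by $C|y_1 - y_0|$; then the constraint that $\phi$ lies above both functions on both sides of $x_t$ forces $\phi(x_t) - u$ to be no larger than (slope difference)$\times$(segment length) up to the quadratic error term $\|D^2_{xx}G\| |x_0 - x_1|^2$, and using the hypothesis $|y_1 - y_0| \ge |x_0 - x_1|$ this quadratic term is absorbed into $C_3 |x_1 - x_0||y_1 - y_0|$.

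Concretely, I would argue: write $x_t = (1-t)x_0 + t x_1$, so $|x_0 - x_t| = t|x_0 - x_1|$ and $|x_1 - x_t| = (1-t)|x_0 - x_1|$. Apply semiconvexity of $\phi$ on the segment $[x_0, x_1]$ to get $\phi(x_t) \le (1-t)\phi(x_0) + t\phi(x_1) + \tfrac12 t(1-t)\|D^2_{xx}G\|_S |x_0 - x_1|^2$. Then $\phi(x_0) = G(x_0, y_0, v_0)$ and $\phi(x_1) = G(x_1, y_1, v_1)$, and I bound $G(x_0, y_0, v_0) - G(x_t, y_0, v_0) = -u + G(x_0, y_0, v_0)$ by a first-order Taylor expansion (the error being $O(\|D^2_{xx}G\|_S |x_0 - x_1|^2)$), and similarly $G(x_1, y_1, v_1) - u$ by Taylor expansion around $x_t$. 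Adding with weights $(1-t)$ and $t$, the first-order terms combine into $\langle (1-t)p_0^{(t)} + t p_1^{(t)} , \cdot\rangle$-type expressions that I can bound using $|p_i| \le \|D_x G\|_S$ and the difference estimate; the leading contribution is thus of size $|x_0 - x_1| \cdot |p_1 - p_0| \cdot$ (a factor involving $t(1-t) \le 1$), and using $|p_1 - p_0| \le C_e \mathrm{diam}(Y)$ together with $|y_1 - y_0| \ge |x_0 - x_1|$ I collect everything into $C_3 |x_1 - x_0||y_1 - y_0|$. \textbf{The main obstacle} I anticipate is bookkeeping the mixed first/second-order Taylor terms cleanly so that the final bound is genuinely bilinear in $|x_1 - x_0|$ and $|y_1 - y_0|$ rather than merely quadratic in $|x_1 - x_0|$ — this is where the hypothesis $|y_1 - y_0| \ge |x_0 - x_1|$ is essential, letting me trade a $|x_0 - x_1|^2$ into $|x_0 - x_1||y_1 - y_0|$, and where I must be careful that $C_3$ depends only on the $C^2$-norm of $G$ over the compact set $S$ supplied by Lemma~\ref{localniceg} and on $C_e$, not on $\phi$ beyond the already-fixed data.
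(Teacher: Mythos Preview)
Your approach is essentially the paper's: semiconvexity (Proposition~\ref{semiconvex}) combined with second-order Taylor expansions of the two supporting $G$-affine functions around $x_t$, a bound on the gradient difference $p_1^{(t)} - p_0^{(t)} := D_xG(x_t,y_1,v_1) - D_xG(x_t,y_0,v_0)$, and absorption of the quadratic remainder via $|x_1 - x_0| \le |y_1 - y_0|$. One correction is needed: for the first-order term you must use the \emph{upper} bi-Lipschitz bound $|p_1^{(t)} - p_0^{(t)}| \le C_e|y_1 - y_0|$ (available because $v_i = H(x_t,y_i,u)$, so both $p_i^{(t)}$ are values of the inverse $G$-exponential with the \emph{same} focus $(x_t,u)$, and the corresponding $G$-segment lies in $S$ by Lemma~\ref{localniceg}), not $|p_1^{(t)} - p_0^{(t)}| \le C_e\,\mathrm{diam}(Y)$ --- the latter would only yield $\phi(x_t) - u \le C|x_1 - x_0|$, which is not the bilinear bound the subsequent lemma needs.

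Incidentally, your direct combination is cleaner than the paper's case split: since $x_0 - x_t = -t(x_1 - x_0)$ and $x_1 - x_t = (1-t)(x_1 - x_0)$, the weighted first-order terms collapse exactly to $t(1-t)\langle p_1^{(t)} - p_0^{(t)}, x_1 - x_0\rangle$, and all quadratic remainders sum to at most $\|D^2_{xx}G\|\,|x_1 - x_0|^2$, giving the result in one line without splitting on the sign of a mixed expression.
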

\begin{proof}
First of all, the definition of supporting function implies that
\begin{align*}
G(x_0, y_0, v_0) - G(x_0,y_1,v_1) = \phi(x_0) - G(x_0,y_1,v_1) \geq 0 \\
G(x_1, y_0, v_0) - G(x_1, y_1, v_1) = G(x_1, y_0, v_0) - \phi(x_1) \leq 0
\end{align*}
so that existence of $x_t$ is implied by the intermediate value theorem. If $t$ was either 1 or 0, then the left hand side of ($\ref{lem4}$) is 0 so that the lemma is trivial. Otherwise, by our choice of $x_t$ and $u$, we have the equalities
\begin{displaymath}
u = G(x_t, y_0, v_0) = G(x_t, y_0, H(x_0, y_0, \phi(x_0))).
\end{displaymath}
Note that by ($\ref{S}$) of Lemma $\ref{localniceg}$, we have $(x_t, y_i, u) \in S$. We use Taylor expansion on $G(x,y_i,v_i)$ at $x_t$ to get
\begin{equation}
\label{lem4:1}
\phi(x_i) - u \leq \langle D_x G(x_t, y_i, v_i), x_i - x_t \rangle + \frac{1}{2} \| D^2_{xx} G \| |x_i - x_t|^2.
\end{equation}
From Lemma $\ref{semiconvex}$,
\begin{align}
\label{lem4:2}
\phi(x_t) -u & \leq (1-t) ( \phi(x_0)-u) + t (\phi(x_1)-u) + \frac{1}{2} t(1-t) \| D^2_{xx} G \| |x_1 - x_0 |^2 \\
& \leq (1-t) (\phi(x_0)-u) + t( \phi(x_1)-u) + \frac{1}{8} \| D^2_{xx} G \| |x_1 - x_0 |^2. \nonumber
\end{align}
If $(1-t)\langle D_x G(x_t, y_0, v_0), x_0 - x_t \rangle + t (\phi(x_1)-u) \leq 0$, then from ($\ref{lem4:1}$) and ($\ref{lem4:2}$),
\begin{align*}
\phi(x_t) - u & \leq (1-t) \left(  \langle D_x G(x_t, y_0, v_0), x_0 - x_t \rangle  + \frac{1}{2} \| D^2_{xx} G \| |x_0 - x_t|^2 \right) \\
& \ \  +t (\phi(x_1)-u) +\frac{1}{8} \| D^2_{xx}G\| |x_1 - x_0|^2 \\
& \leq (1-t) \frac{1}{2} \| D^2_{xx} G \| |x_0 - x_t|^2 +\frac{1}{8} \| D^2_{xx}G\| |x_1 - x_0|^2 \\
& \leq \frac{5}{8} \|D^2_{xx}G \||x_1 - x_0|^2 \\
& \leq \frac{5}{8}\|D^2_{xx}G \||y_1 - y_0||x_1 - x_0|.
\end{align*}
Otherwise, since $t(1-t) \leq 1$, we have
\begin{align*}
0 & \leq (1-t)\langle D_xG(x_t, y_0, v_0), x_0 - x_t \rangle + t (\phi(x_1)-u) \\
& \leq \frac{1}{t} \langle D_xG(x_t, y_0, v_0), x_0 - x_t \rangle + \frac{1}{1-t} (\phi(x_1)-u) 
\end{align*}
so that 
\begin{align*}
\allowdisplaybreaks
\phi(x_t) -u & \displaystyle \leq  (1-t)\langle D_xG(x_t, y_0, v_0), x_0 - x_t \rangle + t(\phi(x_1) - u) \\
& \displaystyle \ \ + \left( \frac{1}{8}+\frac{1}{2}(1-t)\right)\|D^2_{xx}G \||x_1 - x_0|^2 \\
& \displaystyle \leq  \frac{1}{t} \langle D_xG(x_t, y_0, v_0), x_0 - x_t \rangle + \frac{1}{1-t} (\phi(x_1) - u) \\
&\displaystyle \ \  + \left( \frac{1}{8}+\frac{1}{2}(1-t)\right)\|D^2_{xx}G \||x_1 - x_0|^2 \\
& \displaystyle \leq \frac{1}{t} \langle D_xG(x_t, y_0, v_0), x_0 - x_t \rangle + \frac{1}{1-t} \langle D_xG(x_t, y_1, v_1), x_1 - x_t \rangle \\
& \displaystyle \ \ + \frac{1}{2(1-t)} \|D^2_{xx}G \||x_1 - x_t|^2 + \frac{5}{8}\|D^2_{xx}G \||x_1 - x_0|^2 \\
& \displaystyle = \langle D_xG(x_t, y_0, v_0), x_0 - x_1 \rangle + \langle D_xG(x_t, y_1, v_1), x_1 - x_0 \rangle \\
& \displaystyle \ \ +\frac{1}{2}\|D^2_{xx}G \||x_1 - x_t ||x_1 - x_0| + \frac{5}{8}\|D^2_{xx}G \||x_1 - x_0|^2. \\
\end{align*}
In the last equality, we used $ t = \frac{|x_0 - x_t|}{|x_1 - x_0|}$ and $1-t = \frac{|x_1 - x_t|}{ |x_1 - x_0|}$. We use the fundamental theorem of calculus to obtain
\begin{align*}
\phi(x_t) - u & \displaystyle  \leq \int_0^1 \frac{d}{d\theta} \langle D_x G (x_t, \yth, H(x_t, \yth, u)) , x_1 - x_0 \rangle ds \\ 
&\displaystyle  \ \ \ \ \ + \frac{9}{8}\| D^2_{xx}G \| |x_1 - x_0|^2 \\
&\displaystyle  \leq \| E \| C_e |y_1 - y_0| |x_0 - x_t| + \frac{9}{8}\| D^2_{xx}G \| |x_1 - x_0|^2 \\
& \leq \left(  C_e^2 + \frac{9}{8} \| D^2_{xx}G \| \right) |y_1 - y_0||x_1 - x_0|
\end{align*}
where $\yth$ is the $G$-segment connecting $y_0$ and $y_1$ with focus $(x_t, u)$.
\end{proof}

\begin{Lem}
\label{gsubdiffest}
Let $x_t$ be as in Lemma $\ref{localest}$. There exist $l$, $r$ that depend on $|x_0 - x_1|$ and  $|y_0 - y_1|$ and $\kappa$ such that if $\mathcal{N}_r([x_0, x_1]) \subset X$ and
\begin{equation}
\label{gsubestcond}
|y_0 - y_1| \geq \max\{|x_1 - x_0|, \kappa |x_1 - x_0|^{1/5}\}
\end{equation}
then, choosing $x_1$ close to $x_0$ if necessary, we have
\begin{displaymath}
\mathcal{N}_l \left( \left\{ y_{\theta} | \theta \in \left[\frac{1}{4} , \frac{3}{4}\right] \right\} \right) \cap Y \subset \partial_G \phi ( B_r (x_t) ) 
\end{displaymath}
where $\yth$ is the $G$-segment connecting $y_0$ and $y_1$ with focus $(x_t,u)$ as in the proof of Lemma $\ref{localest}$.
\end{Lem}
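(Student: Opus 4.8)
The plan is a ``vertical sliding'' argument for $G$-affine functions with focus near the middle of the $G$-segment, using the quantitative (gLp) of Lemma~\ref{qglp} to confine the resulting contact point to $B_r(x_t)$. Let $\yth$ be the $G$-segment joining $y_0,y_1$ with focus $(x_t,u)$ and $\vth=H(x_t,\yth,u)$; since $G(x_t,y_0,v_0)=G(x_t,y_1,v_1)=u$ we have $\bar\phi(x_t)=u$ for $\bar\phi:=\max\{G(\cdot,y_0,v_0),G(\cdot,y_1,v_1)\}$, and as both $G$-affine functions support $\phi$ globally, $\phi\geq\bar\phi$, so $\phi(x_t)\geq u$; Lemma~\ref{localest} gives $\phi(x_t)\leq u+\epsilon_1$, where $\epsilon_1:=C_3|x_0-x_1||y_0-y_1|$. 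Lemma~\ref{localniceg} supplies a compact $S\Subset\h$ containing all points $(x_t,\yth,u)$, so Remark~\ref{nicedom} and Lemma~\ref{qglp} apply on $S$: we get $C_e$, gain constants $\delta_0,\gamma$ for the quantitative (gLp), and (by the same compactness that produced $\beta$) a constant $\beta'>0$ with $-D_uH\geq\beta'$ on $S$. Put $c:=\tfrac{3}{16}\delta_0$ (the effective gain of Lemma~\ref{qglp} on $\theta\in[\tfrac14,\tfrac34]$) and set
\[
 r:=\Big(\tfrac{KC_3}{c}\cdot\tfrac{|x_0-x_1|}{|y_0-y_1|}\Big)^{1/2},\qquad l:=\epsilon_1,
\]
with $K$ a structural constant (depending only on $\beta'$ and the $C^1$ norm of $H$ on $S$) fixed below.

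Fix $\bar y\in\nbhd{l}{\{\yth:\theta\in[\tfrac14,\tfrac34]\}}\cap Y$ and choose $\theta\in[\tfrac14,\tfrac34]$ with $|\bar y-y_\theta|\leq l$. Let $h(x):=H(x,\bar y,\phi(x))$ on $\overline{B_r(x_t)}$. Since $G(x,\bar y,\cdot)$ is strictly decreasing, $G(x,\bar y,v)\leq\phi(x)$ for every $x\in\overline{B_r(x_t)}$ if and only if $v\geq v^*:=\max_{\overline{B_r(x_t)}}h$, and at a maximizer $\tilde x$ of $h$ one has $G(\tilde x,\bar y,v^*)=\phi(\tilde x)$; thus $G(\cdot,\bar y,v^*)$ supports $\phi$ at $\tilde x$ on $B_r(x_t)$. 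If $\tilde x$ is interior to $B_r(x_t)$ and $(\tilde x,\bar y,v^*)\in\g$, Proposition~\ref{loctoglob} upgrades this to $\bar y\in\gsub{\phi}{\tilde x}\subset\gsub{\phi}{B_r(x_t)}$, which is the assertion. The membership $(\tilde x,\bar y,v^*)\in\g$, i.e. $(\tilde x,\bar y,\phi(\tilde x))\in\h$, holds for $|x_0-x_1|$ small by the localization of Lemma~\ref{localniceg}: then $\tilde x$ is close to $x_0$, $\phi(\tilde x)$ close to $\phi(x_0)$, and $\bar y$ close to $\gsub{\phi}{x_0}$ (using Proposition~\ref{gsubconti} to place $\yth$ near $\gsub{\phi}{x_0}$), so the triple falls in the set $S$ of~(\ref{S}).

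It remains to see that $\tilde x$ is interior --- the one step that uses Lemma~\ref{qglp}. Since $\phi(x_t)\in[u,u+\epsilon_1]$ and $H(x_t,\bar y,\cdot)$ is Lipschitz with $|\bar y-y_\theta|\leq l$, we get $h(x_t)\geq H(x_t,y_\theta,u)-C(l+\epsilon_1)=v_\theta-C(l+\epsilon_1)$. For $x\in\partial B_r(x_t)$ (with $r$ below the threshold of Lemma~\ref{qglp} at scale $\epsilon=\tfrac14$), Lemma~\ref{qglp} gives $\phi(x)\geq\bar\phi(x)\geq G(x,y_\theta,v_\theta)+q$ with $q:=c|y_1-y_0|^2r^2-\gamma r^3$; applying the decreasing map $H(x,\bar y,\cdot)$ and using $-D_uH\geq\beta'$, $|\bar y-y_\theta|\leq l$ yields $h(x)\leq v_\theta+Cl-\beta' q$. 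So $\max_{\partial B_r(x_t)}h<h(x_t)$ provided $\beta' q>C(2l+\epsilon_1)$. By the choice of $r$, $c|y_1-y_0|^2r^2=K\epsilon_1$, so imposing $\gamma r\leq\tfrac12 c|y_1-y_0|^2$ gives $q\geq\tfrac12 K\epsilon_1$, and then (with $l=\epsilon_1$) $\beta' q>C(2l+\epsilon_1)$ holds once $K>6C/\beta'$; with such a $K$ fixed, every maximizer of $h$ is interior, so $\tilde x\in B_r(x_t)$. Finally, $r$ is below the Lemma~\ref{qglp} threshold once $|x_0-x_1|$ is small (as $r\to0$: ``choose $x_1$ close to $x_0$''), and $\gamma r\leq\tfrac12 c|y_1-y_0|^2$ unwinds to $KC_3|x_0-x_1|\leq c^3|y_0-y_1|^5/(4\gamma^2)$, i.e. to $|y_0-y_1|\geq\kappa|x_0-x_1|^{1/5}$ with $\kappa:=(4KC_3\gamma^2/c^3)^{1/5}$ --- the second clause of~(\ref{gsubestcond}); the first clause $|y_0-y_1|\geq|x_0-x_1|$ is what Lemma~\ref{localest} needs, and $\nbhd{r}{[x_0,x_1]}\subset X$ gives $B_r(x_t)\subset X$.

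The main obstacle I expect is the quantitative balancing in the interior-point step: the gain $c|y_1-y_0|^2r^2$ from Lemma~\ref{qglp} must dominate the error $\epsilon_1$ from Lemma~\ref{localest} with a fixed margin while the cubic remainder $\gamma r^3$ is kept below half of the gain, which pins $r\sim(|x_0-x_1|/|y_0-y_1|)^{1/2}$ and so forces both the $\tfrac15$ power and the constant $\kappa$ in~(\ref{gsubestcond}). A secondary point, absent for optimal transport, is that a $G$-affine function plus a constant need not be $G$-affine; this is why the sliding is phrased via the level $v^*=\max h$ and why the $\g$-membership of $(\tilde x,\bar y,v^*)$ must be obtained from the localization of Lemma~\ref{localniceg} rather than taken for granted.
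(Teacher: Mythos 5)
Your overall strategy is the same as the paper's: compare the test $G$-affine function against $\phi$ on $\partial B_r(x_t)$ via the quantitative (gLp) of Lemma~\ref{qglp}, show the contact point is interior, and invoke Proposition~\ref{loctoglob}. Your bookkeeping via $v^* := \max_{\overline{B_r(x_t)}} H(\cdot,\bar y,\phi(\cdot))$ is in fact a clean way to phrase the vertical sliding; the paper implements the same slide through the one--parameter family $h \mapsto G(\cdot,y,H(x_t,y,h))-\phi$ and an intermediate--value argument, and your version sidesteps the unjustified-looking assertion $\phi \geq G(\cdot,y,v(y,u))$ on $B_r(x_t)$ that the paper uses to anchor the slide.

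However, there is a quantitative gap in the choice of $l$. You set $l=\epsilon_1=C_3|x_0-x_1||y_0-y_1|$, which comes from estimating $h(x_t)$ and $h(x)$ \emph{separately} against $v_\theta$: each comparison $H(\cdot,\bar y,\cdot)\leftrightarrow H(\cdot,y_\theta,\cdot)$ costs an error $O(l)$, so the two errors add to $O(l)$ in the interior-point condition $\beta' q > C(2l+\epsilon_1)$, forcing $l\lesssim\epsilon_1$. But these two $y$-errors nearly cancel. Writing
\begin{displaymath}
h(x_t)-h(x) = \bigl[H(x_t,\bar y,\phi(x_t))-H(x_t,\bar y,u)\bigr] + \bigl[\Psi(x_t)-\Psi(x)\bigr] + \bigl[H(x,\bar y,G(x,y_\theta,v_\theta))-H(x,\bar y,\phi(x))\bigr]
\end{displaymath}
with $\Psi(z):=H(z,\bar y,G(z,y_\theta,v_\theta))$, and using $G(x_t,y_\theta,v_\theta)=u$, one finds $D_z\Psi(z)=D_xH+D_uH\,D_xG$ at $(z,\bar y,G(z,y_\theta,v_\theta))$, which vanishes identically when $\bar y=y_\theta$; hence $|D_z\Psi|\leq C|\bar y-y_\theta|\leq Cl$ and $|\Psi(x_t)-\Psi(x)|\leq Crl$ rather than $Cl$. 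The interior-point condition then reads $\beta' q > C(\epsilon_1+rl)$, which permits $l\sim r|y_0-y_1|^2 \sim |x_0-x_1|^{1/2}|y_0-y_1|^{3/2}$, matching~(\ref{rlk}). Your $l$ is smaller by the factor $(|y_0-y_1|/|x_0-x_1|)^{1/2}$. This is not a cosmetic difference: downstream in the proof of Theorem~\ref{main}, the inequality $C'r^n \geq C_V\, l^{n-1}|y_0-y_1|$ with your $l$ reduces to $|x_0-x_1|^{1-n/2}\gtrsim |y_0-y_1|^{3n/2}$, which is vacuous for $n\geq 2$ and yields no modulus of continuity. So while the literal existence assertion of the lemma is met, the $l$ you produce cannot be fed into Lemma~\ref{gsubvolest} and the volume comparison to obtain the stated H\"older exponent. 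The paper's decomposition~(\ref{lem5:2})--(\ref{lem5:3}) is precisely a way of extracting the extra factor of $|x-x_t|$ by applying the fundamental theorem of calculus first in $x$ (using $G(x_t,y,v(y,\phi(x_t)))=\phi(x_t)$ for all $y$) and then in $y$; that is the ingredient missing from your estimate.
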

\begin{proof}
Let $u$ be as in the proof of Lemma $\ref{localest}$. By the definition of $G$-convexity and Lemma $\ref{qglp}$, we have
\begin{align}
\label{lem5:1}
\phi(x) & \geq \max \{{G(x, y_1, v_0) , G(x,y_1,v_1)} \} \\
& \geq G(x, y_{\theta}, v_{\theta}) + \frac{3}{16}\delta_0|y_1 - y_0|^2|x-x_t|^2 -\gamma |x-x_t|^3 \nonumber
\end{align}
for $\theta \in [ \frac{1}{4} , \frac{3}{4}]$, $|x - x_t| \leq \frac{1}{4}C$ where $v_{\theta} = H(x_t, \yth, u)$. Next, we look at a $G$-affine function that pass through $(x_t , \phi(x_t))$ with focus $(y, H(x_t,y,\phi(x_t)))$ where $y \in \h_{x_t,\phi(x_t)}$, i.e. $G(x, y, H(x_t, y, \phi(x_t)))$. we estimate this function on the boundary of $B_r(x_t)$ to compare with $\phi$. Let $v(y,u) = H(x_t, y, u)$.
\begin{align}
\label{lem5:2}
G(x,y,v(y,\phi(x_t))) & = G(x, y, v(y,\phi(x_t))) - G(x, y_{\theta}, v(\yth, \phi(x_t))) \nonumber \\ 
& \ \ + G(x, y_{\theta}, v(\yth, \phi(x_t))) - G(x, y_{\theta}, v(\yth, u)) \\
& \ \ + G(x,y_{\theta},v(\yth,u)) \nonumber
\end{align}
Note that by Lemma $\ref{localniceg}$, $(x_t, \yth, u) \in S \Subset \h$. For the first line, noting that $\phi(x_t) = G(x_t, y,v(y,\phi(x_t)))$, we have
\begin{align}
\label{lem5:3:1}
\allowdisplaybreaks
&G(x,y,v(y,\phi(x_t))) - \phi(x_t) \nonumber \\
&= \displaystyle \int_0^1 \frac{d}{ds} \left( G(x_t + s(x-x_t), y, v(y, \phi(x_t))) \right) ds \\
&= \displaystyle \int_0^1 \langle D_x G( x_t + s(x-x_t), y, v(y, \phi(x_t))), x-x_t \rangle ds \nonumber
\end{align}
and similar equation holds for $G(x, \yth, v(\yth, \phi(x_t))) - \phi(x_t)$. Therefore, we have
\begin{align}
\label{lem5:3}
\allowdisplaybreaks
&G(x, y, v(y,\phi(x_t))) - G(x, y_{\theta}, v(\yth, \phi(x_t))) \nonumber \\
&=\displaystyle \int_0^1 \langle \left( \begin{array}{l} D_x G( x_t + s(x-x_t), y, v(y, \phi(x_t))) \\ - D_x G( x_t + s(x-x_t), \yth, v(\yth, \phi(x_t))) \end{array} \right),  x-x_t \rangle ds \nonumber \\
&=\displaystyle \int_0^1 \int_0^1 \frac{d}{ds'} \langle \left( D_xG \left(\begin{array}{l} x_t + s(x - x_t), \yth + s'(y-\yth), \\ v( \yth + s'(y - \yth), \phi(x_t)) \end{array} \right) \right), x-x_t \rangle ds'ds \\
&= \displaystyle \int_0^1 \int_0^1 \left( D_{xy}G + D_{xv}G D_y H \right) [y-\yth, x-x_t] ds'ds \nonumber \\
&= \displaystyle \int_0^1 \int_0^1 \left( D_{xy}G + D_{xv}G \frac{D_y G}{-D_v G} \right) [y-\yth, x-x_t] ds'ds \nonumber \\
&\leq C_4 |x - x_t| |y - y_{\theta}|, \nonumber
\end{align}
where $C_4$ depends on the $C^2$ norm of $G$ and $\beta$ (note that the functions in the last integral are evaluated at different points so that $C_4$ might not be equal to $C_e$). For the second line, we use Lemma $\ref{localest}$.
\begin{align}
\label{lem5:4}
&G(x, y_{\theta}, v(\yth, \phi(x_t))) - G(x, y_{\theta}, v(\yth, u)) \nonumber \\
&= \displaystyle \int_0^1 \frac{d}{ds} \left( G{(x, y_{\theta}, v(\yth, u + s(\phi(x_t)-u))} \right) ds \nonumber \\
&= \displaystyle \int_0^1 D_v G D_u H (\phi(x_t)-u) ds \\
&\leq C'_5 |\phi(x_t)-u| \nonumber \\
&\leq C_5|x_1 - x_0| |y_1 - y_0| \nonumber
\end{align}
where $C_5$ depends on the $C^1$ norm of $G$, $\beta$, and $C_3$. applying ($\ref{lem5:3}$) and ($\ref{lem5:4}$) to ($\ref{lem5:2}$),
\begin{equation}
\label{lem5:5}
G(x, y,v(y, \phi(x_t))) \leq G(x,y_{\theta},v(\yth,u)) + C_4 |x - x_t| |y - y_{\theta}| + C_5|x_1 - x_0| |y_1 - y_0|.
\end{equation}
Comparing ($\ref{lem5:1}$) and ($\ref{lem5:5}$), if we have
\begin{equation}
\label{lem5:6}
 C_4 |x - x_t| |y - y_{\theta}| + C_5|x_1 - x_0| |y_1 - y_0| \leq \frac{3}{16}\delta_0|y_1 - y_0|^2|x-x_t|^2 -\gamma |x-x_t|^3,
\end{equation}
then we can obtain $G(x, y, v(y, \phi(x_t))) \leq \phi(x)$. ($\ref{lem5:6}$) is satisfied if we have
\begin{align*}
 C_5|x_1 - x_0| |y_1 - y_0| \leq & \frac{1}{16}\delta_0|y_1 - y_0|^2|x-x_t|^2 \\
 C_4 |x - x_t| |y - y_{\theta}| \leq & \frac{1}{16}\delta_0|y_1 - y_0|^2|x-x_t|^2 \\
 \gamma |x-x_t|^3 \leq & \frac{1}{16}\delta_0|y_1 - y_0|^2|x-x_t|^2.
\end{align*}
Therefore we choose 
\begin{equation}
\label{rlk}
r^2 = \frac{16C_5}{\delta_0} \frac{|x_1 - x_0|}{|y_1 - y_0|}, \quad l = \frac{\delta_0}{16C_4}r |y_1 - y_0|^2, \quad \kappa = \left( \frac{16^3 \gamma^2 C_5}{\delta_0 ^3} \right) ^{\frac{1}{5}}
\end{equation}
so that we obtain
\begin{displaymath}
G(x, y, v(y, \phi(x_t))) \leq \phi(x) \textrm{ for }  y \in \mathcal{N}_l \left( \left\{ y_{\theta} | \theta \in [ \frac{1}{4} , \frac{3}{4} ] \right\} \right) \textrm{ and }  x \in \partial B_r (x_t).
\end{displaymath} Note that $\kappa$ does not depend on $x_0$ and $x_1$. From the condition ($\ref{gsubestcond}$), we know that 
\begin{displaymath}
r^2 \leq \frac{16C_5}{\kappa \delta_0} |x_1 - x_0|^{\frac{4}{5}}, \quad l \leq \frac{\sqrt{\delta_0 C_5}}{4C_4} |x_1 - x_0|^{\frac{1}{2}}\mathrm{diam}(Y)^{\frac{3}{2}}.
\end{displaymath}
Therefore, choosing $x_1$ close enough to $x_0$ so that $|x_1 - x_0| \leq \frac{4C_4^2 r_2^2}{\mathrm{diam}(Y)^{3}\delta_0C_5}$, we can assume that 
\begin{equation}
\label{smallr}
l \leq \frac{r_2}{2} 
\end{equation} 
where $r_2$ is from the proof of Lemma $\ref{localniceg}$. Since $G(x_t, y,v(y, \phi(x_t))) = \phi(x_t)$, we get a local maximum of $G(x, y, v(y, \phi(x_t))) - \phi(x)$ at some point $x_y \in B_r(x_t)$ with non-negative value. Then from the proof of Lemma \ref{localniceg}, we get that $\nbhd{l}{ \left\{ \yth | \theta \in [0,1] \right\} } \cap Y \subset \h_{x_y,\phi(x_t)}$.  If $G(x_y, y, v(y,\phi(x_t))) = \phi(x_y)$, then $G(x, y,v(y, \phi(x_t)))$ is a local support of $\phi$ at $x_y$. Since $\phi$ is $G$-convex, the $G$-affine function $G(x, y, v(y, \phi(x_t)))$ is a global support of $\phi$ and hence $ y \in \partial_G \phi (x_y) \in \partial_G \phi ( B_r (x_t))$ by Proposition \ref{loctoglob} (Recall by Remark $\ref{rmkonunif}$, we can use this proposition). Suppose $G(x_y, y, v(y, \phi(x_t))) > \phi(x_y)$. Note that we have $\phi(x) \geq G(x, y,v(y, u))$. We define 
\begin{align*}
f_y(h) & = \max_{x \in B_r(x_t)} \left\{ G(x, y,v(y, h))  - \phi(x)\right\} \\
& = \max_{x \in B_r(x_t)} \left\{ G(x, y, H(x_t, y, h))  - \phi(x) \right\}
\end{align*}
Then $f_y(\phi(x_t)) > 0$ and $f_y(u) \leq 0$. Moreover, since $\|D_vG\|$ and $\|D_u H\|$ are bounded on $\Psi$ and $\Phi$, the functions in the $\max$ are equicontinuous so that $f_y$ is a continuous function. Therefore, there exists $h_y \in [u, \phi(x_t)]$ at which we have $f_y(h_y) = 0 $. In other words, $G(x, y, v(y, h_y))$ supports $\phi$ at some point $x'$ in $B_r(x_t)$. From ($\ref{smallr}$) and the proof of Lemma $\ref{localniceg}$, we have $(x', y, h_y) \in \h$. Hence we get $y \in \partial_G \phi (B_r(x_t))$. 
\end{proof}

\subsection{Some convex geometry}
In this subsection, we prove a useful convex geometry lemma to estimate the volume of $\nbhd{l}{ \left\{ y_{\theta} | \theta \in \left[\frac{1}{4} , \frac{3}{4}\right] \right\} }$. This subsection corresponds to Lemma 5.10 in \cite{Loeper2009OnTR}. In this paper we  will show the same type of lemma but we do not use the generating function $G$ outside of its domain $X \times Y \times \R$. Note that in \cite{Loeper2009OnTR}, it is used that the cost function $c$ can be extended to and differentiated outside of $\Omega \time \Omega'$.

\begin{Rmk}
\label{convlip}
Suppose we have a compact convex set $A$. Then for each point $p \in \partial A$, there is $r_p > 0$ such that the boundary $\partial A$ can be written as a graph of a convex function up to an isometry in $B_{r_p}(p)$. But since $A$ is compact, we can have $r>0$ that does not depend on $p \in \partial A$ but can replace $r_p$. Moreover, since the convex functions are locally Lipschitz, by taking smaller $r$ if needed, we can assume that each convex function that describes $\partial A$ is a Lipschitz function in $B_r(p)$. Then using compactness again, we can assume that we have a uniform Lipschitz constant. In fact, we can bound the Lipschitz constant by $L = \frac{\textrm{diam{A}}}{r}$. See corollary A.23 of \cite{Figalli2017TheME}.
\end{Rmk}

\begin{Lem}
\label{convvolball}
Let $A$ be a compact convex set. Then there exist $r_A>0$ and $C_A$ that depend on the set $A$ such that for any $r' < r_A$ and $x \in A$, we have
\begin{equation}
\label{volball}
\vol{B_{r'}(x) \cap A} \geq C_A \vol{B_{r'}(x)}.
\end{equation}
\end{Lem}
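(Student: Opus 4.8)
The plan is to bypass the boundary description of Remark~\ref{convlip} entirely and use only convexity of $A$, via a homothety centered at the given point $x$. First observe that if $A$ has empty interior then $\vol{A}=0$ and the asserted inequality holds trivially with $C_A=0$ and any $r_A>0$; so I may assume $\mathring{A}\neq\emptyset$, hence $\vol{A}>0$ and $D:=\mathrm{diam}(A)>0$. Writing $\omega_n=\vol{B_1(0)}$, I would set
\begin{displaymath}
r_A = 2D, \qquad C_A = \frac{\vol{A}}{\omega_n\,(2D)^n}.
\end{displaymath}
Since $A$ is contained in a ball of radius $D$ about any of its points we have $\vol{A}\le\omega_n D^n$, so automatically $C_A\le 2^{-n}<1$, the expected order of magnitude.

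Next I would fix an arbitrary $x\in A$ and $r'<r_A$, put $t=\dfrac{r'}{2D}\in(0,1)$, and consider the dilated copy $A_t := x + t\,(A-x)$. Two elementary observations then finish the argument. First, since $A$ is convex and $x\in A$, for every $y\in A$ the point $x+t(y-x)=(1-t)x+ty$ lies in $A$, so $A_t\subseteq A$. Second, for every $y\in A$ one has $|x+t(y-x)-x|=t\,|y-x|\le tD=r'/2<r'$, so $A_t\subseteq B_{r'}(x)$. Hence $A_t\subseteq B_{r'}(x)\cap A$, and since Lebesgue measure scales by the $n$-th power of the dilation factor,
\begin{displaymath}
\vol{B_{r'}(x)\cap A}\ \geq\ \vol{A_t}\ =\ t^n\,\vol{A}\ =\ \frac{(r')^n}{(2D)^n}\,\vol{A}.
\end{displaymath}
Dividing by $\vol{B_{r'}(x)}=\omega_n(r')^n$ yields exactly $\vol{B_{r'}(x)\cap A}\geq C_A\,\vol{B_{r'}(x)}$, which is \eqref{volball}.

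I do not expect a serious analytic obstacle here; the only point needing attention is that the dilation factor must satisfy $t\le 1$, because the inclusion $A_t\subseteq A$ uses convexity of $A$ together with $x\in A$ and breaks once $t>1$. This is precisely what forces the restriction $r'<r_A$ with $r_A$ comparable to $\mathrm{diam}(A)$: for $r'$ much larger than the diameter the left side of \eqref{volball} saturates at $\vol{A}$ while the right side grows like $(r')^n$, so some upper bound on $r'$ is genuinely unavoidable. If instead one insisted on the route suggested by Remark~\ref{convlip} — fixing a nearest boundary point $p$ of $x$ and using that near $p$ the set $A$ contains a cone $\{z_n\ge L|z'|\}$, with $L$ the uniform Lipschitz constant of that remark — the delicate point would be that $x$ can lie on the far side of this local cone, so one must invoke $x\in A$ (not merely $\dist(x,\partial A)\le r'$) to exclude degenerate slivers, and one would end up with a constant depending on $L$. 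The homothety argument compresses all of this into one line and produces the clean constant $C_A=\vol{A}/(\omega_n(2\,\mathrm{diam}(A))^n)$.
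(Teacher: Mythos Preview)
Your proof is correct and considerably more streamlined than the paper's. The paper proceeds locally via Remark~\ref{convlip}: it splits into the cases $\dist(x,\partial A)\ge r/2$ (where $B_{r'}(x)\subset A$ trivially) and $\dist(x,\partial A)<r/2$, and in the latter writes $\partial A$ near a nearest boundary point as the graph of an $L$-Lipschitz convex function, then shows that $B_{r'}(x)\cap A$ contains the translated conical sector $B_{r'}(x)\cap\{q^n\ge x^n+L|q'-x'|\}$, whose volume is a fixed fraction (depending on $L$) of $\vol{B_{r'}(x)}$. Your global homothety $x+t(A-x)\subseteq B_{r'}(x)\cap A$ replaces all of this with two lines and gives the explicit constant $C_A=\vol{A}/(\omega_n(2\,\mathrm{diam}(A))^n)$.

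One caveat worth flagging: the paper's longer route is not wasted effort, because the proof of Lemma~\ref{convvoltube} does not merely cite the \emph{statement} of Lemma~\ref{convvolball} but explicitly invokes ``the proof of Lemma~\ref{convvolball}'' to obtain, at every point of the curve, a conical sector that is a translate of one \emph{fixed} cone $\mathrm{Sec}_0$. That uniform shape is then used to inscribe balls of a common radius $L'l$ and to set up the disjointness argument. Your dilated copies $x+t(A-x)$ vary with the basepoint $x$, so they do not directly supply this fixed-shape ingredient; if you adopt your argument here, Lemma~\ref{convvoltube} would need its own small adjustment.
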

\begin{proof}
Let $r$ be as in the Remark $\ref{convlip}$, and let $L$ be the Lipschitz constant in the same remark. Let $r' < \frac{r}{2}$. If $x \in \{ q \in A | \dist(q,\partial A) \geq \frac{r}{2} \}$, Then $B_{r'} (x) \subset A$ so that $\vol{B_{r'} (x) \cap A} = \vol{B_{r'} (x)}$. Now suppose $\dist(x, \partial A) < \frac{r}{2}$, then $\exists p \in \partial A$ such that $B_{r'} (x) \subset B_r(p)$. In $B_r(p)$, we have that $\partial A$ is the graph of a convex function $f$ up to an isometry, and $f$ is Lipschitz with the Lipschitz constant $L$. Then, since $x \in A$, we know that $x^n \geq f(x')$ where $x = (x' , x^n) \in \R^{n-1} \times \R$ and we have the inclusions
\begin{align}
\label{inclusionball}
B_{r'} (x) \bigcap  A & \supset B_{r'} (x) \cap \{ q = (q', q^n) \in \R^{n-1} \times \R | q^n \geq f(q') \} ) \nonumber \\
& \supset B_{r'} (x) \cap \{ q | q^n \geq f(x') + L |x' - q'| \}  \\
& \supset B_{r'} (x) \cap \{ q | q^n \geq x^n + L|x' - q'| \}. \nonumber
\end{align}
The last set above is the intersection of a ball centered at $x$ and a cone with vertex at $x$. Hence the volume of this set is a multiple of the volume of the ball where the constant multiplied depends on $L$. Therefore the above inclusion implies ($\ref{volball}$) with $C_A$ depending on $L$, hence on $A$. Therefore the lemma holds with $r_A = \frac{r}{2}$.
\end{proof}

In the next lemma, we use the term ``length of a curve''. The curve that we are using is not necessarily differentiable, so we use next definition for the length of a curve

\begin{Def}
Let $\gamma : [a,b] \to \R^n$ be a continuous curve. We define its length by
\begin{displaymath}
\len{\gamma} = \sup \left\{ \sum_{i=1}^n |\gamma(t_i) - \gamma(t_{i-1})| \big| a = t_0 \leq t_1 \leq \cdots \leq t_n =b \right\} 
\end{displaymath}
\end{Def}
It is well known that this definition preserves a lot of properties of arclength of $C^1$ curves. Some continuous curves may have $\len{\gamma} = \infty$, but $\len{\gamma}$ is finite if $\gamma$ is a Lipschitz curve with finite domain.

\begin{Lem}
\label{convvoltube}
Let $A$ be a compact convex set and let $\gamma : [0,1] \to A$ be a bi-Lipschitz curve, that is
\begin{equation}
\label{bilipcurve}
C_{\gamma}' |s-t| \leq |\gamma(s) - \gamma(t)| \leq C_{\gamma} |s-t|
\end{equation}
for some constants $C_{\gamma}'$ and $C_{\gamma}$. Then there exists $K_A$ and $l_A>0$ that depend on $A$ and $C_{\gamma}'$ such that for any $l \leq l_A$, we have
\begin{equation}
\vol{\nbhd{l}{\gamma} \cap A} \geq K_A C'_{\gamma}l^{n-1}.
\end{equation}
\end{Lem}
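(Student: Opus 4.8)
The plan is to pack the curve $\gamma$ with a family of pairwise disjoint balls of radius $l$, all centered on $\gamma$ and hence contained in $\nbhd{l}{\gamma}$, whose cardinality grows like $1/l$; then apply Lemma $\ref{convvolball}$ to bound below the volume of each such ball intersected with $A$. Note that the lower bi-Lipschitz bound in $\eqref{bilipcurve}$ already forces $\len{\gamma} \geq |\gamma(1)-\gamma(0)| \geq C'_{\gamma}$, which is the heuristic reason the constant $C'_{\gamma}$ (and not $C_{\gamma}$) appears in the conclusion; the argument below realizes this quantitatively.

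First I would select the centers. Put $N = \lfloor C'_{\gamma}/(2l) \rfloor$ and $t_i = 2il/C'_{\gamma}$ for $i = 0,1,\dots,N$. If $l \leq C'_{\gamma}/2$ then $N \geq 1$ and $t_N \leq 1$, so each $t_i \in [0,1]$ and $\gamma(t_i) \in A$. For $i \neq j$, the lower bound in $\eqref{bilipcurve}$ gives $|\gamma(t_i) - \gamma(t_j)| \geq C'_{\gamma}|t_i - t_j| = 2l|i-j| \geq 2l$, so the open balls $B_l(\gamma(t_i))$, $i = 0,\dots,N$, are pairwise disjoint. Moreover each $B_l(\gamma(t_i)) \subset \nbhd{l}{\gamma}$, since every point of that ball lies within distance $l$ of the point $\gamma(t_i)$ of the curve.

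Next, provided also $l \leq r_A$ with $r_A$ as in Lemma $\ref{convvolball}$, that lemma yields $\vol{B_l(\gamma(t_i)) \cap A} \geq C_A \vol{B_l(\gamma(t_i))} = C_A \omega_n l^n$, where $\omega_n$ denotes the volume of the unit ball in $\R^n$. Since the balls are disjoint and all contained in $\nbhd{l}{\gamma} \cap A$, summing over $i$ gives
\begin{displaymath}
\vol{\nbhd{l}{\gamma} \cap A} \geq (N+1)\,C_A \omega_n l^n \geq \frac{C'_{\gamma}}{2l}\,C_A \omega_n l^n = \frac{C_A \omega_n}{2}\,C'_{\gamma}\, l^{n-1},
\end{displaymath}
using $N+1 \geq C'_{\gamma}/(2l)$. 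Taking $l_A = \min\{ r_A, C'_{\gamma}/2 \}$ and $K_A = C_A \omega_n / 2$ (both depending only on $A$ and on $C'_{\gamma}$) completes the proof.

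The only points requiring care are the bookkeeping: verifying $t_N \leq 1$ and $N \geq 1$ under the stated smallness of $l$, and confirming that the balls are genuinely disjoint (one may instead pass to radius $l/2$ if strict separation is desired, at the cost of an extra constant). The potentially worrisome issue — that $\gamma(t_i)$ may lie on or near $\partial A$, so that $B_l(\gamma(t_i))$ protrudes from $A$ — is precisely what Lemma $\ref{convvolball}$ was designed to handle, so no further work is needed there; this is the step I would expect to do the real work.
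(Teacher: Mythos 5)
Your proof is correct, but it takes a genuinely different route from the paper's. The paper subdivides $\gamma$ into $2m$ short arcs $\gamma_i$ of diameter at most $r_A/2$ (with $m \sim C_\gamma / r_A$), and for each arc builds by hand a translated conical-sector tube $\nbhd{L'l}{\gamma_i + v_i}$ sitting inside $\nbhd{l}{\gamma} \cap A$ --- essentially re-running the boundary-graph construction behind Lemma \ref{convvolball} at the level of tubes --- then sums the volumes of the tubes around alternate arcs after checking they are disjoint. You instead place $N+1 \approx C'_{\gamma}/(2l)$ points $\gamma(t_i)$ on the curve at parameter spacing $2l/C'_{\gamma}$, so that the lower bi-Lipschitz bound makes the balls $B_l(\gamma(t_i))$ pairwise disjoint, and invoke Lemma \ref{convvolball} as a black box on each ball; summing roughly $(1/l)\cdot l^n$ contributions gives the $l^{n-1}$ bound with the correct factor of $C'_{\gamma}$. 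Your argument is shorter and more modular: it uses Lemma \ref{convvolball} exactly as stated rather than reproducing its content, and it never needs the upper Lipschitz constant $C_\gamma$ (the paper's $m$ and $l_A$ both involve $C_\gamma$, whereas yours do not, which in fact matches the lemma's claim that the constants depend only on $A$ and $C'_{\gamma}$). The bookkeeping ($t_N \leq 1$, $N \geq 1$, disjointness of open balls whose centers are at distance at least $2l$, and $N+1 \geq C'_{\gamma}/(2l)$) all checks out. The only cosmetic point: Lemma \ref{convvolball} is stated for $r' < r_A$ with strict inequality, so take $l_A = \min\{ r_A/2, C'_{\gamma}/2 \}$, say.
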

\begin{proof}
We assume $l < \frac{r_A}{2}$ where $r_A$ is from the previous lemma $\ref{convvolball}$. Let $m \in \mathbb{N}$ be the smallest number such that $ C_{\gamma} \leq r_A m$. Then by taking $r_A$ smaller if necessary, we have $m \leq \frac{2C_{\gamma}}{r_A}$. Note that we have $\len{\gamma} \leq C_{\gamma}$ from ($\ref{bilipcurve}$). We define $\gamma_i(t) = \gamma( (1-t)\frac{i}{2m} + t \frac{i+1}{2m} )$. Then $\gamma_i$ is bi-Lipschitz with constants $\frac{C_{\gamma}'}{2m}$ for  lower bound and $\frac{C_{\gamma}}{2m}$ for upper bound i.e. 
\begin{displaymath}
\frac{C_{\gamma}'}{2m}|s-t| \leq | \gamma_i(s) - \gamma_i(t) | \leq \frac{C_{\gamma}}{2m} |s-t|.
\end{displaymath} 
In addition, we have $\len{\gamma_i} \leq \frac{r_A}{2}$ by our choice of $m$ and $l$. Suppose $\nbhd{l}{\gamma_i} \cap \partial A \neq \emptyset$. Then we can write $\partial A$ as a graph of a Lipschitz convex function $f$ with Lipschitz constant $L$ around some point $p \in \nbhd{l}{\gamma_i} \cap \partial A$. Moreover, for any $x \in \nbhd{l}{\gamma_i}$, there are some $t, s \in [0,1]$ such that
\begin{align*}
|x-p| & < |x- \gamma_i(t) | + |\gamma_i(t) - \gamma_i(s)| + |\gamma_i(s) - p| \\
& \leq \frac{r_A}{2} + \frac{r_A}{2} + \frac{r_A}{2} = \frac{3}{4}r,
\end{align*}
where $r$ is the radius of the ball around a point on $\partial A$ in which we can write $\partial A$ as a graph of a convex function (Remark $\ref{convlip}$). Therefore $\nbhd{l}{\gamma_i}$ lies in the epigraph of $f$ in $B_r(p)$. Then the proof of Lemma $\ref{convvolball}$ shows that at each point on the curve $\gamma_i$, there exists a conical sector $\mathrm{Sec}_{\gamma_i(t)}$ in $B_l(\gamma_i(t)) \cap A$ which is a translation of the conical sector $\mathrm{Sec_0}$ : 
\begin{displaymath}
\mathrm{Sec}_0 = B_l(0) \cap \{ q | q^n \geq  L|q'| \}.
\end{displaymath}
Note that the inscribed ball in this conical sector has radius $L'l$ where $L'$ is a constant that depends on $L$ so that the inscribed ball is $B_{L'l}(v)$ for some $v \in \mathrm{Sec}_0$. Therefore, for each $0 \leq i \leq 2m-1$, we get $v_i$ such that $\nbhd{L'l}{\gamma_i+v_i} \subset \nbhd{l}{\gamma} \cap A$. Now, if we have $l < \frac{C_{\gamma}'}{4m}$, then for any $x \in \nbhd{l}{\gamma_i}$ and $y \in \nbhd{l}{\gamma_{i+2}}$, we get that for some $s, t \in [0,1]$,
\begin{align*}
|x-y| & \geq |\gamma_i(t) - \gamma_{i+2}(s)| - ( |x-\gamma_i(t)| + | y - \gamma_{i+2}(s)|) \\
& \geq \frac{C_{\gamma}'}{2m} - 2l >0.
\end{align*}
Therefore, $\nbhd{L'l}{\gamma_i} \cap \nbhd{L'l}{\gamma_{i+2}} = \emptyset$. Note that each $\nbhd{L'l}{\gamma_i}$ has volume bounded below by $(L'l)^{n-1}|\gamma_i(0) - \gamma_i(1)| \geq \frac{(L')^{n-1}C_{\gamma}'}{2m} l^{n-1}$ so that 
\begin{align*}
\vol{\nbhd{l}{\gamma} \cap A} & \geq \vol{\bigcup_{i=0}^{2m-1}\nbhd{L'l}{\gamma_i+v_i}} \\
& \geq \vol{\bigcup_{i=0}^{m-1} \nbhd{L'l}{\gamma_{2i}+v_i}} \\
& \geq \frac{(L')^{n-1}C_{\gamma}'}{2m} l^{n-1} \times m = \frac{1}{2}(L')^{n-1}C_{\gamma}'l^{n-1}.
\end{align*}
Therefore we get the lemma with $l_A = \frac{1}{8} \frac{C_{\gamma}'}{C_{\gamma}} r_A \leq \min \{ \frac{r_A}{2} , \frac{C_{\gamma}'}{4m} \} = \frac{C_{\gamma}'}{4m} $ and $K_A = \frac{1}{2}(L')^{n-1}$.
\end{proof}

\begin{Lem}
\label{gsubvolest}
Let $\yth$ be as in Lemma $\ref{gsubdiffest}$ and let $A = \h^*_{x_0, \phi(x_0)}$. If $l \leq \frac{r_A}{8C_e^5}$, then we have
\begin{equation}
\vol{\nbhd{l}{ \left\{ \yth | \theta \in \left[ \frac{1}{4} , \frac{3}{4} \right] \right\} } \cap Y} \geq C_V l^{n-1} |y_0 - y_1|
\end{equation}
where $C_V$ depends on $x_0$, $\h$, and $C_e$.
\end{Lem}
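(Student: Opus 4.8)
The plan is to transfer the estimate, through the $G$-exponential map with focus $(x_0,\phi(x_0))$, to the compact convex set $A=\h^*_{x_0,\phi(x_0)}$ (convex by \eqref{vDomConv}), and then invoke the convex-geometry Lemma \ref{convvoltube}. Write $T_0=D_xG(x_0,\cdot,H(x_0,\cdot,\phi(x_0)))$, which is the inverse of $\gexp{x_0}{\phi(x_0)}$ and carries $\h_{x_0,\phi(x_0)}\subset Y$ onto $A$. By Remark \ref{derivexp} the derivative of $\gexp{x_0}{\phi(x_0)}$ is $E^{-1}$, so by \eqref{Ce1} and \eqref{Ce2}, applied with the compact set $S\Subset\h$ from Lemma \ref{localniceg}, both $T_0$ and $T_0^{-1}=\gexp{x_0}{\phi(x_0)}$ are $C_e$-bi-Lipschitz and $|\det DT_0^{-1}|=|\det E^{-1}|\geq C_e^{-n}$ on the region relevant to us. Here it is essential that, by Lemma \ref{localniceg}, the $G$-segment $\{y_\theta\}$ together with a fixed neighbourhood $\nbhd{r_2}{\gsub{\phi}{x_0}}\cap Y$ of it lies in $S\subset\h_{x_0,\phi(x_0)}$, so these bounds are legitimately available as long as $l\leq r_2/2$, which can be arranged by shrinking $r_A$.

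Next I would form the curve $\gamma\colon[0,1]\to A$, $\gamma(s)=T_0(y_{\theta(s)})$ with $\theta(s)=\frac{1}{4}+\frac{s}{2}$, that is the $T_0$-image of the middle half of the $G$-segment, and check it is bi-Lipschitz. This stacks three applications of \eqref{Ce2}: for the $G$-segment with focus $(x_t,u)$ one has $\frac{1}{C_e}|\theta-\theta'|\,|p_1-p_0|\leq|y_\theta-y_{\theta'}|\leq C_e|\theta-\theta'|\,|p_1-p_0|$; taking $\theta\in\{0,1\}$ shows $|p_1-p_0|$ is comparable to $|y_0-y_1|$ up to the factor $C_e$; composing with the $C_e$-bi-Lipschitz $T_0$ then gives lower Lipschitz constant $C_\gamma'=|y_0-y_1|/(2C_e^3)$ and upper Lipschitz constant $C_\gamma=C_e^3|y_0-y_1|/2$, whose ratio $C_\gamma'/C_\gamma=C_e^{-6}$ is independent of $|y_0-y_1|$. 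Note that $\gamma$ is genuinely curved, since its focus $(x_t,u)$ is not $(x_0,\phi(x_0))$, but this is immaterial: Lemma \ref{convvoltube} asks only for a bi-Lipschitz curve.

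Now apply Lemma \ref{convvoltube} to $\gamma$ inside $A$: for $l'\leq l_A=\frac{1}{8}\frac{C_\gamma'}{C_\gamma}r_A=\frac{r_A}{8C_e^6}$ we obtain $\vol{\nbhd{l'}{\gamma}\cap A}\geq K_AC_\gamma'(l')^{n-1}$, where $r_A$ and $K_A$ depend only on $A$. Choosing $l'=l/C_e$, the admissibility requirement $l/C_e\leq l_A$ becomes exactly the hypothesis $l\leq r_A/(8C_e^5)$. Finally I transfer back: since $T_0^{-1}$ is $C_e$-Lipschitz, $T_0^{-1}(\nbhd{l/C_e}{\gamma}\cap A)\subset\nbhd{l}{\{y_\theta:\theta\in[\frac{1}{4},\frac{3}{4}]\}}\cap Y$, so the change-of-variables formula together with the Jacobian bound gives
\begin{displaymath}
\vol{\nbhd{l}{\{y_\theta:\theta\in[\frac{1}{4},\frac{3}{4}]\}}\cap Y}\ \geq\ C_e^{-n}\vol{\nbhd{l/C_e}{\gamma}\cap A}\ \geq\ C_e^{-n}K_AC_\gamma'\left(\frac{l}{C_e}\right)^{n-1}\ =\ \frac{K_A}{2C_e^{2n+2}}\,l^{n-1}|y_0-y_1|,
\end{displaymath}
which is the assertion with $C_V=K_A/(2C_e^{2n+2})$, depending only on $A=\h^*_{x_0,\phi(x_0)}$ (hence on $x_0$ and $\h$) and on $C_e$.

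The real obstacle is not any single inequality but the domain bookkeeping: one must be sure that $\gamma$ and the tube $\nbhd{l/C_e}{\gamma}$ remain inside $\h^*_{x_0,\phi(x_0)}$, equivalently that the $l$-tube around the $G$-segment remains inside $\h_{x_0,\phi(x_0)}$, so that $T_0^{-1}$, the bounds \eqref{Ce1} and \eqref{Ce2}, and the Jacobian estimate all apply. This is exactly what the structure \eqref{S} of $S$ in Lemma \ref{localniceg} supplies, since $S$ contains the neighbourhood $\nbhd{r_2}{\gsub{\phi}{x_0}}\cap Y$ of the entire segment; it only requires $l$ small compared with $r_2$, and because $r_A$ depends on $A$ it may be shrunk to force this, so no extra hypothesis is needed. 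A second, minor point is that $A$ need not be closed, but $\partial A$ is Lebesgue-null, so replacing $A$ by $\overline{A}$ in the convex-geometry lemmas changes none of the volumes in play.
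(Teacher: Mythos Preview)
Your proof is correct and follows essentially the same route as the paper: push the half-segment forward through $T_0=(\gexp{x_0}{\phi(x_0)})^{-1}$ to the convex set $A=\h^*_{x_0,\phi(x_0)}$, check the resulting curve is bi-Lipschitz with ratio $C_\gamma'/C_\gamma=C_e^{-6}$, apply Lemma~\ref{convvoltube} at radius $l/C_e$, and pull the volume back via the $C_e$-bi-Lipschitz bound on $T_0^{-1}$; the threshold $l\leq r_A/(8C_e^5)$ drops out exactly as you compute. Your reparametrization constants $C_\gamma'=|y_0-y_1|/(2C_e^3)$, $C_\gamma=C_e^3|y_0-y_1|/2$ are in fact the correct ones (the paper writes $2/C_e^3$ and $2C_e^3$, a harmless arithmetic slip that leaves the ratio unchanged), and your closing remarks on domain bookkeeping and replacing $A$ by $\overline{A}$ are sound refinements that the paper leaves implicit.
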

\begin{proof}
Note that $\theta \mapsto \yth$ is a bi-Lipschitz curve with 
\begin{displaymath}
\frac{1}{C_e^2} |y_1-y_0||\theta - \theta'| \leq |\yth - y_{\theta'}| \leq C_e^2 |y_1 - y_0 | |\theta - \theta'|.
\end{displaymath}
Then the reparametrized curve $\theta \mapsto y_{(1-\theta)\frac{1}{4} + \theta \frac{3}{4}}$ is bi-Lipschitz with Lipschitz constants $\frac{2}{C_e^2}|y_1 - y_0|$ and $2C_e^2 |y_1 - y_0|$. Then the curve $\theta \mapsto {\gexp{x_t}{\phi(x_t)}}^{-1}(y_{(1-\theta)\frac{1}{4} + \theta \frac{3}{4}})$ is bi-Lipschitz with Lipschitz constants $\underline{L} = \frac{2}{C_e^3}|y_1 - y_0|$ for lower bound and $\overline{L} = 2C_e^3 |y_1 - y_0|$ for upper bound. Moreover, since the map ${\gexp{x_0}{\phi(x_0)}}^{-1}$ is bi-Lipschitz with Lipschitz constants $\frac{1}{C_e}$ and $C_e$, we have
\begin{align*}
\displaystyle \nbhd{\frac{l}{C_e}}{ {\gexp{x_0}{\phi(x_0)}}^{-1} \left( \left\{ \yth | \theta \in \left[ \frac{1}{4} , \frac{3}{4}  \right] \right\} \right) }  \cap \h^*_{x_0, \phi(x_0)} \\
\displaystyle  \subset {\gexp{x_0}{\phi(x_0)}}^{-1} \left( \nbhd{l}{ \left\{ \yth | \theta \in  \left[ \frac{1}{4} , \frac{3}{4} \right] \right\} } \cap Y \right).
\end{align*}
Note that by (vDomConv), $\h^*_{x_0, \phi(x_0)}$ is convex. Moreover, by our choice of $l$, we have $\frac{l}{C_e} \leq \frac{1}{8} {\underline{L}}/{\overline{L}} r_{A} $. Then by Lemma $\ref{convvoltube}$, we get a constant $K_{x_0}$ that depends on $\h^*_{x_0, \phi(x_0)}$, hence on $x_0$ such that 
\begin{align*}
\displaystyle \vol{\nbhd{\frac{l}{C_e}}{ {\gexp{x_0}{\phi(x_0)}}^{-1} \left( \left\{ \yth | \theta \in \left[ \frac{1}{4} , \frac{3}{4} \right] \right\} \right) } \cap \h^*_{x_0, \phi(x_0)}} \\
\displaystyle \geq K_{x_0} \frac{2}{C_e^3}|y_1 - y_0| (l / C_e)^{n-1}.
\end{align*}
Using bi-Lipschitzness once more, we obtain
\begin{displaymath}
\vol{ \nbhd{l}{ \left\{ \yth | \theta \in  \left[ \frac{1}{4} , \frac{3}{4} \right] \right\} } \cap Y } \geq C_V  l^{n-1} |y_1 - y_0|,
\end{displaymath}
with $C_V = {2K_{x_0}}/{C_e^{2n+2}}$.
\end{proof}

\begin{Rmk}
\label{condforunifLip}
The constant $C_V$ depends on the Lipschitz constant of $\partial \h^*_{x_0, \phi(x_0)}$ so that it depends on $x_0$ and the value of $\phi$ at $x_0$. If we assume that $\{ \h^*_{x,u} \}_{(x,u) \in X \times \R}$ is uniformly Lipschitz, we can get rid of this dependency.
\end{Rmk}

\subsection{Proof of the main theorem}
In the proof of the main theorem, we will use the lemmas in previous subsections. So, we review the conditions to use those lemmas. We choose $x_0, x_1 \in \mathring{X}$ and $y_0, y_1 \in \gsub{\phi}{x_0}$. First of all, to localize the argument, we choose $x_1$ close enough to $x_0$. Explicitly, we choose $|x_0 - x_1|$ smaller than $\delta(x_0)$ to use Lemma $\ref{localniceg}$, smaller than $\frac{4C_4^2 r_2^2}{\mathrm{diam}^3\delta_0C_5}$ to use Lemma $\ref{gsubdiffest}$, and smaller than $\frac{C_4^2r_A^2}{4C_e^{12} \delta_0 C_5} \frac{1}{\mathrm{diam}(Y)^3}$ (with $A = \h^*_{x_0, \phi(x_0)}$) to get the condition in Lemma $\ref{gsubvolest}$. For $y_0$ and $y_1$, we need $|y_0 - y_1| \geq \max{ \{|x_0 - x_1| , \kappa |x_0 - x_1|^{1/5} \}}$ to use Lemma $\ref{gsubdiffest}$. Note that if there is no such $y_0$ and $y_1$, that means we have H$\ddot{\textrm{o}}$lder regularity with exponent $\frac{1}{5}$.

\begin{proof}[Proof of the main theorem $\ref{main}$]
We deal with the first part of the theorem. \\
1. In the first case, we deal with the case $p = \infty$ first. If $p = \infty$, then we have
\begin{displaymath}
\mu \left( B_r(x_t) \right) \leq C \vol{ B_r(x_t) } \leq C' r^n
\end{displaymath}
for some $C$ and $C'$. Moreover, since $\phi$ is an Alexandrov solution, we have
\begin{equation}
\label{main1}
\begin{array}{rl}
\mu \left( B_r(x_t) \right) = \nu \left( \gsub{\phi}{ B_r(x_t)} \right) & \geq \nu \left( \nbhd{l}{ \left\{ \yth | \theta \in \left[ \frac{1}{4} , \frac{3}{4} \right] \right\}} \right) \\
& \geq \lambda C_V l^{n-1}|y_0 - y_1|.
\end{array}
\end{equation}
Combining these, we get $C'r^n \geq C_V l^{n-1}|y_0 - y_1|$. We plug ($\ref{rlk}$) into this inequality to obtain
\begin{displaymath}
|y_0 - y_1| \leq C |x_0 - x_1|^{\frac{1}{4n-1}}
\end{displaymath}
for some constant $C$. Note that this implies single valuedness and H$\mathrm{\ddot{o}}$lder continuity of $\partial_G \phi$.\\
\noindent
Next we deal with the case $p < \infty$. If the condition on $\mu$ holds with $p < \infty$, define $F$ by
\begin{equation}
\label{defineF}
F(V) = \sup\{\mu(B)| B \subset X \textrm{ a ball of volume } V \}.
\end{equation}
Then we have $F(\vol{ B_r (x_t) }) \geq \mu( B_r (x_t) ) = \nu (\gsub{\phi}{B_r (x_t)})$. This with ($\ref{main1}$) implies 
\begin{equation}
\label{boundnuF}
F \left( C \frac{|x_0 - x_1|^{n/2}}{|y_0 - y_1|^{n/2}} \right) \geq C'|x_0 - x_1|^{(n-1)/2}|y_0 - y_1|^{(3n-1)/2}
\end{equation} 
for some constants $C$ and $C'$. From the assumption on $\mu$, we have $F(V) \leq C'' V^{1- 1/p}$ for some $C''$. This with above inequality ($\ref{boundnuF}$), we have
\begin{displaymath}
|y_0 - y_1|^{2n-1+\frac{1}{2}(1-\frac{n}{p})} \leq C|x_1 - x_0|^{\frac{1}{2}(1- \frac{n}{p})}.
\end{displaymath}
Therefore, with the condition $p >n$, we get
\begin{displaymath}
|y_0 - y_1| \leq C |x_0 - x_1|^{\frac{\rho}{4n-2+\rho}}
\end{displaymath}
where $\rho = 1-\frac{n}{p}$. Therefore, we have the following : for any $x_0 \in \mathring{X}$, there exists some constants $r_{x_0}$ and $C_{x_0}$ that depends on $x_0$, $\phi(x_0)$, continuity of $\phi$ at $x_0$ such that if $|x_0 - x_1| < r_{x_0}$, we have 
\begin{equation}
\label{gsubdiffholder}
|y_0 - y_1| \leq C_{x_0} |x_0 - x_1|^{\frac{\rho}{4n-2+\rho}}.
\end{equation}
Then for a compact set $X' \Subset \mathring{X}$, we can cover it with a finite number of balls on which we have ($\ref{gsubdiffholder}$) with respect to the center of the ball. Then we can get the H$\ddot{\textrm{o}}$lder regularity of $\partial_G \phi$ on $X'$ by connecting any two points in $X'$ with a piecewise segment where each segment lies in one of the balls. Then the H$\ddot{\textrm{o}}$lder constant will be bounded by the sum of the H$\ddot{\textrm{o}}$lder constant on each ball that contains a segment times the number of the balls. To get the H$\ddot{\textrm{o}}$lder regularity of the potential $\phi$, we note that $\gsub{\phi}{x} = \gexp{x}{\phi(x)} ( D_x \phi (x) )$, and use remark $\ref{nicedom}$. \\
Now we prove the second part of the theorem.\\
2. Suppose we have $f : \R^{+} \to \R^{+}$ such that $\lim_{r \to 0} f(r) = 0$ and for any $x \in X$ and $r \geq 0$ we have $\mu(B_r(x))\leq f(r)r^{n(1-\frac{1}{n})}$. Note that we can choose $f$ strictly increasing. Then by ($\ref{defineF}$), we have
\begin{equation}
\label{Fbound}
F(V) \leq f\left( \left( \frac{1}{\omega_n} \right) ^{\frac{1}{n}} V^{\frac{1}{n}} \right) \times \left( \frac{1}{\omega_n} \right)^{1 - \frac{1}{n}} V^{1-\frac{1}{n}}
\end{equation}
where $\omega_n$ is the volume of the unit ball in $\R^n$. Define $\tilde{f}$ by 
\begin{displaymath}
\tilde{f}(V) ^{2n-1}= \left( \frac{1}{\omega_n} \right)^{1 - \frac{1}{n}} f \left( \left( \frac{1}{\omega_n} \right)^{\frac{1}{n}} V^{\frac{1}{2}} \right).
\end{displaymath}
Then ($\ref{Fbound}$) becomes
\begin{equation}
\label{Ftildebound}
F(V) \leq \left[ \tilde{f} \left( V^{\frac{2}{n}} \right) \right]^{2n-1} V^{1 - \frac{1}{n}}.
\end{equation}
We combine ($\ref{Ftildebound}$) with ($\ref{boundnuF}$), and we have
\begin{equation}
\label{singlevalue}
\tilde{f} \left( C' \frac{|x_0 - x_1|}{|y_0 - y_1|} \right) \geq C''|y_0 - y_1|
\end{equation}
for some constants $C', C'' >0$. Note that we can assume that $\frac{|x_0 - x_1|}{|y_0 - y_1|} \to 0$ as $|x_0 - x_1| \to 0$ because otherwise, we get a Lipschitz estimate so that we still can get H$\ddot{\textrm{o}}$lder regularity. Then ($\ref{singlevalue}$) implies that $\partial_G \phi$ is a single valued map. Let $g$ be the modulus of continuity of the $G-$subdifferential map $\partial_G \phi$. We divide into two cases. If $g(u) \leq \max \left\{ u, \kappa u^{\frac{1}{5}} \right\}$, then we get $g(u) \to 0$ as $u \to 0$. In the other case, from ($\ref{singlevalue}$) we get
\begin{displaymath}
\tilde{f} \left( C' \frac{u}{g(u)} \right) \geq C'' g(u).
\end{displaymath}
Since $f$ was strictly increasing, so is $\tilde{f}$, so that $\tilde{f}$ is invertible. Therefore the above equation is equivalent to
\begin{displaymath}
u \geq \tilde{f}^{-1} \left( C'' g(u) \right) \frac{g(u)}{C'}.
\end{displaymath}
Let $\omega$ be the inverse of $z \mapsto \tilde{f}^{-1}(C''z) \frac{z}{C'}$. Note that $\omega$ is strictly increasing. Therefore, composing $\omega$ on above inequality shows that
\begin{displaymath}
g(u) \leq \omega (u).
\end{displaymath}
Since the function $z \mapsto \tilde{f}^{-1}(C''z) \frac{z}{C'}$ is strictly increasing and has limit 0 as $ z \to 0$, $\omega(u)$ also has limit 0 as $u \to 0$. Therefore the above inequality implies that $g(u) \to 0$ as $u \to 0$. Hence the modulus of continuity of $\partial_G \phi$ has limit 0 as the variable tends to 0 so that $\partial_G \phi$ is continuous at $x_0$.
\end{proof}

\begin{Rmk}
Note that from Lemma $\ref{localniceg}$ and $\ref{gsubvolest}$, the constants that we get depend on the value of $\phi$ and continuity of $\phi$. Because of these dependencies, the H$\ddot{\textrm{o}}$lder regularity that we get in this paper might not be uniform for solutions to the (GJE). To get a bounds on the H$\ddot{\textrm{o}}$lder norm that do not depend on the solution $\phi$, we need to add some conditions on the set $\h$ so that we can get a uniform Lipschitz constant for $\partial \h^*_{x,u}$ for any $(x,u) \in X \times \R$ as mentioned in Remark $\ref{condforunifLip}$. Moreover, we need an apiori estimate on the modulus of continuity of $\phi$ and $\partial_G \phi$ as mentioned in Remark $\ref{deponsol}$.
\end{Rmk}

\bibliographystyle{amsalpha}

\bibliography{RegJacobiEq.bib}

\end{document}